\DeclareMathAlphabet{\mathpzc}{OT1}{pzc}{m}{it}
\theoremstyle{plain}
\newtheorem{theorem}{Theorem}
\newcommand{\beq}{\begin{equation}}
\newcommand{\eeq}{\end{equation}}
\newcommand{\bc}{}
\newtheorem{cor}[theorem]{Corollary}
\newtheorem{lemma}[theorem]{Lemma}
\newtheorem*{conjecture}{Conjecture}
\numberwithin{equation}{section}
\numberwithin{theorem}{section}
\numberwithin{figure}{section}
\theoremstyle{definition}
\begin{document}
{\center\bf\large
Average Error for Spectral Asymptotics\\ on Surfaces\\}
\renewcommand{\thefootnote}{\fnsymbol{footnote}}
\noindent Robert S. Strichartz\footnote{Research supported by the
National Science Foundation, grant DMS-1162045\\

\underbar{Keywords}: Spectral asymptotics, Laplacian, average error, surfaces of constant curvature, cone point singularities, almost periodic functions\\

\noindent \underbar{Mathematics Subject Classification} (2010): 47 A 10, 58 C 40, 58 J 50}\\
Math Department\\
Malott Hall\\
Cornell University\\
Ithaca, NY 14853\\
str@math.cornell.edu\\

 \begin{abstract}
Let $N(t)$ denote the eigenvalue counting function of the Laplacian on
a compact surface of constant nonnegative curvature, with or without
boundary.  We define a refined asymptotic formula $\widetilde
N(t)=At+Bt^{1/2}+C$, where the constants are expressed in terms of the
geometry of the surface and its boundary, and consider the average
error $A(t)=\frac 1 t \int^t_0 D(s)\,ds$ for $D(t)=N(t)-\widetilde
N(t)$.  We present a conjecture for the asymptotic behavior of $A(t)$,
and study some examples that support the conjecture.\\
\end{abstract}

{\center ``The mills of God grind slowly, yet they grind exceeding small.''\\}
{\hfill Proverb\\}

\section{Introduction}

For any positive self-adjoint operator with discrete spectrum
\[0\le\lambda_1\le\lambda_2\le\lambda_3\le\cdots\rightarrow\infty\qquad
\text {(repeated according to multiplicity)}\]
we consider the eigenvalue counting function
\beq N(t)=\#\{\lambda_j\le t\}.\eeq
Often there is a predicted asymptotic approximation $\widetilde N(t)$,
and the problem arises to estimate the error
\beq D(t)=N(t)-\widetilde N(t).\eeq
Here we investigate some examples where the average error
\beq A(t)=\frac 1 t \int_0^t D(s)\,ds\eeq
is better behaved, following up on [JS].  In our examples we deal with
$-\Delta$ on a compact surface $S$ which is either flat or has
constant positive curvature.  The Weyl asymptotic formula has
$N(t)=At+\text o(t)$ where the constant $A$ is given by $A=\frac 1
{4\pi} \text{Area}(S)$.  If $S$ has a reasonable boundary then there
is a second term in the asymptotics, $N(t)=At+Bt^{1/2}+\text
o(t^{1/2})$, where the constant $B$ depends on the boundary
conditions.  If we impose Neumann boundary conditions on all of
$\partial S$, then $B=\frac 1 {4\pi}\text{length}(\partial S)$, and if
we impose Dirichlet boundary conditions then $B=-\frac 1
{4\pi}\text{length}(\partial S)$.  We may also consider mixed boundary
conditions, splitting $\partial S= \partial S_N\cup \partial S_D$ and
imposing Neumann boundary conditions on $\partial S_N$ and Dirichlet
boundary conditions on $\partial S_D$.  In that case $B=\frac 1
{4\pi}\text{length}(\partial S_N)-\frac 1 {4\pi}\text{length}(\partial
S_D)$.\\

For our purposes we will need a more refined asymptotics with an
additional constant term:
\beq \widetilde N(t)=At+Bt^{1/2}+C\eeq
where $A$ and $B$ are as above and $C$ is given as follows.  We assume
that $\partial S$ is piecewise smooth, with a finite number of corners
with angles $\{\theta_j\}$.  We will write $C=C_1+C_2+C_3$, where
$C_1$ is the contribution from the corners, $C_2$ is the contribution
from the curvature of the smooth arcs in $\partial S$, and $C_3$ is
the contribution from the curvature of $S$.  Thus $C_3$ will be zero
if $S$ is flat, and \beq
C_3=\frac 1 {12\pi} K_2(S),\eeq
where $K_2(S)$ is the total curvature, the integral of the curvature
over $S$, in the general case.  Thus for any sphere $K(S)=4\pi$ and
$C_3=\frac 1 3$.  For $C_2$ we will take the integral of the curvature
$K_1$ of the boundary,
\beq C_2=\frac 1 {12\pi}\int_{\partial S} K_1.\eeq
Here the curvature is taken with respect to $S$, so it will be
multiplied by $-1$ on the interior portions of the boundary.  If
$\partial S$ has no corners and consists of an outer boundary and $N$
inner boundary curves, then $C_2=\frac {1-N} 6$.\\

To describe the constant $C_1$ we define
\beq \psi(\theta)=\frac 1 {24} \left(\frac \pi \theta - \frac \theta
\pi\right).\eeq
If we impose Neumann or Dirichlet conditions throughout, then we take
\beq C_1=\sum \psi(\theta_j)\eeq
where the sum is over all corner points: For mixed boundary condition
we subdivide the boundary at corner points, and impose Neumann
conditions on the arcs between some corner points, and Dirichlet
conditions on the remaining arcs.  We then sort the corner points into
those $\{\theta_j'\}$ where the same boundary condition is imposed on
both incident arcs, and $\{\theta_j''\}$ where different boundary
conditions are imposed.  Then we take
\beq C_1=\sum\psi(\theta_j')+\sum(\psi(2\theta_j'')-\psi(\theta_j'')).\eeq

We may also allow the surface to have a finite number of cone point
singularities with cone angles $\{\alpha_j\},\,0<\alpha_j<2\pi$.  In
that case we add to $C_1$ the value
\beq \sum 2\psi_j\left(\frac {\alpha_j} 2 \right).\eeq

\begin{conjecture} (a) Suppose $S$ is flat.  Then there exists a
uniformly almost periodic function $g$ of mean value zero, such that
\beq A(t)=g(t^{1/2})t^{-1/4}+O(t^{-1/2}), \eeq
and more generally there exists a sequence $\{g_j\}$ of uniformly
almost periodic functions, with $g_1=g$, such that for all $N$
\beq A(t)=\sum_{j=1}^N g_j(t^{1/2})t^{-j/4}+O(t^{-(N+1)/4})
\eeq
(b) Suppose $S$ has constant positive curvature.  Then there exists a
uniformly almost periodic function $g$ of mean value zero, such that
\beq A(t)=g(\sqrt{t+\frac 1 4})+O(t^{-1/2}),
\eeq
and more generally there exists a sequence $\{g_j\}$ of uniformly
almost periodic functions, with $g_1=g$, such that for all $N$
\beq A(t)=\sum_{j=1}^N g_j(\sqrt{t+\frac 1 4})t^{-(j-1)/2}+O(t^{-N/2})
\eeq
\end{conjecture}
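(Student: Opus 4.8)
The plan is to reduce the statement about $A(t)$ to an asymptotic analysis of the first Riesz (Ces\`aro) mean $R(t)=\int_0^t N(s)\,ds=\sum_{\lambda_j\le t}(t-\lambda_j)$, since $A(t)=t^{-1}R(t)-t^{-1}\int_0^t\widetilde N(s)\,ds$ and the second term is the elementary polynomial $\tfrac12 At+\tfrac23 Bt^{1/2}+C$. Everything then comes down to an expansion of $R(t)$ whose polynomial part is exactly $\int_0^t\widetilde N$ and whose remainder, divided by $t$, has the claimed form. The mechanism producing the oscillatory almost periodic terms should be a trace formula: for the flat surfaces in question (tori, their isometric quotients, rectangles and triangles realized by unfolding, flat cylinders and M\"obius bands, and flat surfaces with cone points) this is Poisson summation applied to the explicit spectrum, and for the constant positive curvature surfaces (the round $S^2$, its quotients, and spherical surfaces with cone points) it is the corresponding summation formula for the Legendre-type spectrum $\lambda=\mu^2-\tfrac14$, which is why the natural variable in part (b) is $\sqrt{t+\tfrac14}$ rather than $\sqrt t$.

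First I would treat the model cases with closed-form spectrum. For a flat torus $\mathbb R^2/\Gamma$ the spectrum is a weighted lattice count, and applying the Hardy--Landau identity (equivalently, inverting the exact heat trace $\Theta(\tau)=\frac{\mathrm{Area}}{4\pi\tau}\sum_{\gamma\in\Gamma}e^{-|\gamma|^2/4\tau}$ against the kernel $e^{t\tau}\tau^{-2}$) yields
\[
R(t)=\int_0^t\widetilde N(s)\,ds+c\sum_{0\ne\gamma\in\Gamma}\frac{t}{|\gamma|^2}\,J_2\!\big(|\gamma|\sqrt t\big),
\]
and the asymptotics $J_2(z)\sim\sqrt{2/\pi z}\cos(z-5\pi/4)$ together with $\sum_\gamma|\gamma|^{-5/2}<\infty$ show that $t^{-1}$ times the sum equals $g_1(\sqrt t)t^{-1/4}+O(t^{-1/2})$ with $g_1$ uniformly almost periodic of mean zero; expanding the Bessel asymptotics to higher order produces $g_2,g_3,\dots$. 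The crucial point, and the reason $A(t)$ behaves better than $D(t)$ itself, is that one extra integration supplies exactly one extra power of decay in the Bessel index, turning the conditionally convergent Hardy series for $D$ into an absolutely convergent --- hence uniformly almost periodic --- series for $A$. The rectangle, cylinder, M\"obius band and standard quotients reduce to this by symmetrizing the lattice sum, and the round sphere is even more elementary: $N(t)=\lfloor\sqrt{t+\tfrac14}+\tfrac12\rfloor^2$, so a direct computation with $u=\sqrt{t+\tfrac14}$ and the periodic sawtooth $\{u+\tfrac12\}$ (including the cross term from integrating by parts) gives $A(t)=g_1(u)+O(t^{-1/2})$ with $g_1$ periodic of mean zero, matching (b).

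Next I would pass to the general surfaces by replacing the explicit spectrum with the geometric side of the appropriate trace formula. In every case that formula writes $R(t)$ as $\int_0^t\widetilde N$ plus a sum over the closed geodesics --- including, for surfaces with cone points, the diffractive geodesics through the singularities, whose contributions are governed by the cone angles precisely through the functions $\psi(\theta)$ and $\psi(\tfrac{\alpha_j}2)$ entering $C_1$ --- each geodesic of length $\ell$ contributing an oscillatory term $\ell^{a}t^{b}\cos(\ell\sqrt t+\phi_\ell)$ in the flat case and $\ell^{a}t^{b}\cos(\ell\sqrt{t+\tfrac14}+\phi_\ell)$ in the curved case, obtained by stationary phase from the inverse-Laplace integral of the wave/heat kernel. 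One must then check: (i) the polynomial part is exactly $\int_0^t\widetilde N(s)\,ds$ --- this is where the definitions of $C_1,C_2,C_3$ enter, since they are precisely the constant heat-coefficient contributions of the corners, the boundary curvature and the surface curvature; and (ii) the geodesic sum, divided by $t$, assembles into $\sum_{j\le N}g_j(\cdot)t^{-(\cdot)}+O(\cdot)$ with each $g_j$ uniformly almost periodic of mean zero, which requires that the length spectrum be discrete with polynomially bounded multiplicities and that, after one Ces\`aro integration, the amplitudes become absolutely summable.

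The main obstacle is step (ii) in the non-arithmetic cases: flat surfaces with cone points whose length spectrum is not a lattice, and spherical surfaces with cone points, where the eigenvalues are zeros of combinations of Legendre functions of non-integer index with no elementary form. There, \emph{uniform} (as opposed to merely Besicovitch mean-square) almost periodicity of the limiting functions is the delicate issue: one must show the geodesic amplitudes decay fast enough after the integration to be absolutely summable, and control the diffractive cone-point contributions uniformly in $t$. I expect this to rest on the known fine structure of the wave trace on surfaces with conical singularities (finite propagation speed, the explicit form of the diffracted wave) rather than on any explicit diagonalization, and it is presumably the reason the statement is offered as a conjecture, with the model computations above furnishing the supporting evidence.
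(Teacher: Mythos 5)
The statement you are addressing is the paper's Conjecture, and the paper offers no proof of it: the author says explicitly that ``the methods of this paper do not provide a pathway to attack the conjecture,'' and the body of the paper only verifies the asserted asymptotics for an explicit list of surfaces whose spectra are computable in closed form (tori, rectangles, three special triangles, cylinders, M\"obius bands, the sphere and its quotients, lunes, and a few cone-point surfaces). Your treatment of the model cases --- the Riesz mean $R(t)=\int_0^t N(s)\,ds$, Hardy--Landau/Poisson summation with $J_2$ for the torus, the sawtooth computation in $u=\sqrt{t+\tfrac14}$ for the sphere --- is sound and is essentially the same evidence the paper assembles (the torus case is Theorem 4 of [JS]; the sphere is Section 4 of the paper). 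Your observation that one Ces\`aro integration converts the conditionally convergent Hardy series for $D$ into an absolutely convergent, hence uniformly almost periodic, series for $A$ is exactly the right explanation of why averaging helps.

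But the proposal is not a proof of the conjecture, and the gap is larger than the technical point you flag at the end. In the general case the ``geometric side of the appropriate trace formula'' that you invoke does not exist in the form you need. What is actually known is the singularity structure of the distribution $\sum\cos\bigl(\sqrt{\lambda_j}\,s\bigr)$ (or the small-time heat expansion); passing from that to a \emph{pointwise} asymptotic expansion of $R(t)$ whose polynomial part is $\int_0^t\widetilde N$ and whose remainder is an absolutely convergent sum over closed and diffractive geodesics requires controlling the smooth, nonsingular part of the wave trace globally in time, which is precisely what trace-formula technology does not provide outside the explicitly diagonalizable (completely integrable or arithmetic) cases. Even the existence of the second Weyl term $Bt^{1/2}$ with remainder $o(t^{1/2})$ requires dynamical hypotheses on the billiard flow; a full expansion to all orders $t^{-j/4}$ with uniformly almost periodic coefficients is far stronger. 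Likewise, identifying the coefficient of $t$ in the polynomial part of $R(t)$ with $C=C_1+C_2+C_3$ cannot be done by matching heat coefficients alone, since a Tauberian passage from the heat trace back to $R(t)$ cannot detect a bounded oscillatory term --- the paper makes this point explicitly in the introduction. So your write-up should be read as a plausible program together with a verification of essentially the same examples the paper treats, not as a proof; the statement remains a conjecture.
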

In this paper we present a number of examples that support the
conjecture.  Of course this is exactly backwards, since the examples
were worked out first and the conjecture was concocted to agree with
the examples.  All of the examples are sufaces for which it is
possible to compute the spectrum of the Laplacian exactly in terms of
trigonometric polynomials for flat surfaces and spherical harmonics
for positively curved surfaces.  In particular, they are highly
symmetric.  So they provide only weak evidence for the conjecture, and
the methods of this paper do not provide a pathway to attack the
conjecture.  Nevertheless, the conjecture seems interesting enough
that it is worth submitting to the mathematical community to stimulate
further work.  We should also point out that although all the almost
periodic functions in our examples of positively curved surfaces are
in fact periodic, we have not made periodicity part of conjecture
(b).\\

For surefaces without boundary the following relationship appears to hold: the set of frequencies in the trigonometric expansions of the almost periodic functions coincides with the set of lengths of closed geodesics on the surface.  At present we have no explanation for why this should be the case.\\

There should be an analogous conjecture for surfaces of constant
negative curvature.  The expression for the refined asymptotics
$\widetilde N(t)$ should be the same, but it is not clear what should
replace (1.11) and (1.13).  There is a vast literature on the spectrum
of the Laplacian on compact Riemann surfaces, or even surfaces of
finite volume, and the relationship with the lengths of closed
geodesics, going back to the Selberg trace formula (see [Bu], [Sa] for
example).  Although the spectral projection operator on the hyperbolic
plane is known explicitly ([Ta], [S]), there is apparently no known
exact computation on any quotient space.  Since we have no examples to
compute, we will not venture a conjecture.\\

There is also a vast literature on the asymptotics of the trace of the
heat kernel ([G]).  The trace of the heat kernel is given as a
smoothing of the eigenvalue counting function
\beq h(t)=\sum e^{t\lambda_j}=t\int^\infty_0 N(s) e^{-st}ds,
\eeq
so asymptotics of $N(t)$ translate immediately into asymptotics of the
heat kernel trace.  If we substitute $N(s)-\widetilde N(s)=\frac d
{ds} (sA(s))$ in (1.15) and integrate by parts we obtain
\beq h(t)-\widetilde h(t)=t^2\int^\infty_0 A(s)se^{-st}\,ds\qquad \text{where}
\eeq
\beq \widetilde h(t)=t\int^\infty_0\widetilde N(s)e^{-st}\, ds,
\eeq
so asymptotics for $A(t)$ also translate into asymptotics of $h(t)$.
To go in the reverse direction requires the application of a Tauberian
Theorem, and cannot reveal the refined asymptotic statements that we
are interested in.  However, we can check that the refined asymptotics
$\widetilde N(t)$ that we use are consistent with the known
asymptotics for $h(t)$ ([BS], [K], [G]).  Our average function $A(t)$
is a much cruder smoothing of $N(t)$ than the trace of the heat
kernel.  This is a double edged sword.  On the one hand, we can't
expect as nice behavior.  On the other hand, the rougher behavior
reveals some interesting new features; for example, the almost
periodic functions.\\

If the boundary is smooth, we observed that the contribution $C_2$ to
the constant term is given by ``topological data,'' namely $C_2=\frac
{1-g} 6$ where $g$ is the genus of the surface.  The analogous
statement for the trace of the heat kernel is the grand finale to the
famous paper of Mark Kac [K].  It is interesting to note that Kac's
argument is based on approximating the smooth boundary by a polynomial
curve, and he is able to say something about the polynomial case in
terms of some frightful integrals, but only in the limit does the
result become comprehensible.  (An explicit formula for the polynomial
case is given in [BS].)\\

In our approach, the polygonal case is quite explicit, but we may also
check that there is continuity in approximating the smooth boundary by
a polygonal boundary.  To simplify the discussion, assume that the
surface is a simply connected convex planar domain and has a smooth
outer boundary $\partial S$.  Subdivide the boundary at $n$ points
$\{x_j\}$, and connect them by line segments to form a polygon $P$.
Then the constant term in $\widetilde N$ for the polygonal domain is
entirely given by $C_1$, namely (1.8) where $\theta_j$ is the angle at
$x_j$.  As $n$ gets large the values of $\theta_j$ approach $\pi$ from
below, and
\beq 24\psi(\theta_j)= 2(1-\frac {\theta_j} \pi) + O((1-\frac
{\theta_j} \pi)^2).
\eeq
A simple geometric argument shows
\beq
\sum \theta_j=(n-2)\pi
\eeq
and so
\beq C_1=\sum\psi(\theta_j)=\frac 1 {24} (2n-\frac 2 \pi
\sum\theta_j)+R=\frac 1 6 + R
\eeq
where the remainder $R$ is $O(\sum(1-\frac{\theta_j} \pi)^2)$ and
tends to zero in the limit.  Thus the constant term for the polygonal
approximations approximates the constant term $(C_2)$ for the smooth
domain.  It is straightforward to localize this argument to polygonal
approximations to portions of the boundary of any surface.\\

We want to emphasize that it is important to average the error $D(t)$
in order to get the asymptotic behavior.  In all our examples the
function $D(t)$ is unbounded.  It is the fact that it is positive and
negative that allows cancellation to produce bounded behavior for
$A(t)$.  Compared with the values of $D(t)$, the constant term in the
asymptotics $\widetilde N(t)$ is indeed ``exceeding small.''  The fact
that it nevertheless shows up in the average seems truly remarkable.\\

Similar results should be valid in higher dimensions, but they will
require a different type of average.  We leave this to the future.\\

We now outline the examples that take up the rest of this paper.
There are two basic examples, the torus discussed in [JS] and the
sphere, discussed in section 4.  In all the other examples, the
eigenfunctions on the surface may be extended to eigenfunctions on a
torus or a sphere, so the spectrum on the surface is a subset of the
spectrum for one of our basic examples.  Our task is then to identify
exactly the subset, and show how the conjecture for the basic example
yields the conjecture for the surface.  This requires only elementary
reasoning, but the arguments are a bit technical.  Of course we have
to be very careful, since the value of the constant depends on getting
exact statements.  It may seem that we are working out a lot of
examples using very similar arguments.  However, we found that we
needed all the examples to help formulate and confirm the conjecture.
We have tried to present enough detail so that the reader can verify
the correctness of the result, without excessively repeating the
framework of the reasoning.  We have used what we hope is self
explanatory notation, but it changes from example to example.  So, for
example, we write $N(t)$ for the counting function for the surface
under discussion (or $N_N(t)$ or $N_D(t)$ if there is a boundary with
Neumann or Dirichlet boundary conditions).  If we have to recall a
counting function from a previous example we will write $N_T(t)$ for a
torus $T$, etc.\\

In section 2 we discuss examples that are polygons in the plane with
either Neumann or Dirichlet boundary conditions throughout.  These
examples are arbitrary rectangles, and certain special triangles:
right isosceles, equilateral, and $30^\circ-60^\circ-90^\circ$.\\

In section 3 we return to the same surfaces, but deal with mixed
boundary conditions, Neumann on part of the boundary and Dirichlet on
the remainder.  We are able to handle all possibilities for the
rectangle and right isosceles triangle, but none for the equilateral
triangle and only some for the $30^\circ-60^\circ-90^\circ$ triangle.
In this section we also consider an arbitrary cylinder and M\"obius
band.\\

In section 4 we consider the sphere, the hemisphere and the projective
sphere.  In section 5 we consider lunes and half-lunes.  These
examples are very useful because they provide a wider variety of
corner angles than the previous examples.\\

In section 6 we discuss surfaces with point singularities.  These
include the flat projective plane discussed in [JS], the surface of a
regular tetrahedron discussed in [GKS], half tetrahedra, and
glued-lunes.\\

In section 7 we re-examine some of our examples of surfaces that have
a finite group $G$ of isometries.  The question is how the spectrum
sorts into the eigenfunctions with prescribed symmetry, given by the
irreducible representations $\{\pi_j\}$ of $G$.  A heuristic suggested
in [S] is that the proportions $N_j(t)/N(t)$, where $N_t(t)$ is the
counting function for eigenfunctions of $\pi_j$ symmetry types is
asymptotically $(\dim \pi_j)^2/\#G$.  Here we work out refined
asymptotics $\widetilde N_j(t)$ for $N_j(t)$, by reducing the
computation of $N_j(t)$ to previously worked out examples for a
fundamental domain of the $G$ action.  The leading term $At$ is as
predicted, but the next term $Bt^{1/2}$ may be positive, negative, or
zero, showing that the individual representations are somewhat
overrepresented or underepresented in the spectrum.  Because we only
have a few examples, we are not able to formulate a conjecture for the
behavior in the general case.  This is another interesting open
problem for the future.\\

\section{Polygons}
Consider the rectangle $R$ of side length, $a,b$, and let $T$ denote
the torus of side length $2a,2b$.  Any Neumann eigenfunction on $R$
extends by even reflection to an eigenfunction on $T$, and similarly a
Dirichlet eigenfunction extends by odd reflection.  In either case we
obtain roughly a quarter of the eigenfunction on $T$, but in fact we
can give a precise formula relating the counting functions $N_N$ and
$N_D$ for $R$ with $N_T$ for $T$.\\

The eigenfunctions on $T$ have the form
\beq e(j,k)=e^{\pi i(\frac j a x+\frac k b y)},\qquad j,k\in\mathbb Z,\eeq
with eigenvalue $\frac {\pi^2j^2} {a^2}+\frac {\pi^2k^2}{b^2}$.  The
Neumann eigenfunctions on $R$ have the form
\beq c(j,k)=\cos\pi \frac j a x\cos\pi \frac k b y,\qquad j\ge0,\,k\ge0,\eeq
while the Dirichlet eigenfunctions have the form
\beq s(j,k)=\sin\pi\frac j a x\sin \pi\frac k b y,\qquad j>0,\,k>0,\eeq
with the same eigenvalue.\\
\begin{lemma}
We have \beq N_N(t)=\frac 1 4 N_T(t)+\frac 1 2 \left[\frac {at^{1/2}}
\pi \right] + \frac 1 2 \left[\frac {bt^{1/2}} \pi \right] +\frac 3
4\eeq
\beq N_D(t)=\frac 1 4 N_T(t)-\frac 1 2 \left[\frac {at^{1/2}} \pi
\right] - \frac 1 2 \left[\frac {bt^{1/2}} \pi \right] -\frac 1 4\eeq
\end{lemma}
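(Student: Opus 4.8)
The plan is to count eigenfunctions on $T$ by the sign pattern of their frequency pair $(j,k)$ and match each pattern to the reflected eigenfunctions on $R$. Every eigenvalue $\frac{\pi^2 j^2}{a^2}+\frac{\pi^2 k^2}{b^2}$ on $T$ comes from the lattice point $(j,k)\in\mathbb Z^2$, and the eigenvalue depends only on $(|j|,|k|)$. I would group the lattice points into four types: (i) $j>0,k>0$; (ii) $j<0,k>0$ plus $j>0,k<0$; (iii) one coordinate zero; (iv) $(0,0)$. By symmetry, type (i) and the set $j<0,k<0$ have equal cardinality below $t$, so together they account for $2\#\{j>0,k>0:\ \le t\}$. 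Similarly type (ii) contributes another $2\#\{j>0,k>0:\ \le t\}$. Thus the points with $jk\neq 0$ contribute $4\#\{j>0,k>0\}$ to $N_T(t)$. The points with exactly one coordinate zero split as $j=0,k\neq0$ (which is $2\#\{k>0:\ \frac{\pi^2k^2}{b^2}\le t\}=2[\frac{bt^{1/2}}{\pi}]$) plus the analogous $2[\frac{at^{1/2}}{\pi}]$, and finally $(0,0)$ contributes $1$.

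Next I would count the Neumann eigenfunctions $c(j,k)$ with $j\ge0,k\ge0$: these are exactly $\#\{j>0,k>0\}+\#\{j=0,k>0\}+\#\{j>0,k=0\}+\#\{(0,0)\}=\#\{j>0,k>0\}+[\frac{bt^{1/2}}{\pi}]+[\frac{at^{1/2}}{\pi}]+1$ eigenfunctions at or below level $t$. Solving the displayed $N_T$ decomposition for $\#\{j>0,k>0\}$ gives
\[
\#\{j>0,k>0:\ \le t\}=\tfrac14\Bigl(N_T(t)-2\bigl[\tfrac{at^{1/2}}{\pi}\bigr]-2\bigl[\tfrac{bt^{1/2}}{\pi}\bigr]-1\Bigr),
\]
and substituting into the count for $N_N$ yields
\[
N_N(t)=\tfrac14 N_T(t)+\tfrac12\bigl[\tfrac{at^{1/2}}{\pi}\bigr]+\tfrac12\bigl[\tfrac{bt^{1/2}}{\pi}\bigr]+\tfrac34,
\]
which is (2.4). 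For the Dirichlet case, $s(j,k)$ runs over $j>0,k>0$ only, so $N_D(t)=\#\{j>0,k>0:\ \le t\}$ directly, and the same substitution gives (2.5).

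The one genuinely delicate point — the main obstacle — is bookkeeping the boundary lattice points ($jk=0$) and the single point at the origin correctly, since the whole content of the constant terms $\tfrac34$ and $-\tfrac14$ lives there; an off-by-one error in whether $j=0$ or $k=0$ is admissible for $c(j,k)$ versus $s(j,k)$, or a double-count of the axes' intersection $(0,0)$, propagates directly into the constant. I would therefore verify the two identities at small $t$ (e.g. $t$ just above $0$, where $N_T=1$, $N_N=1$, $N_D=0$, and both floor functions vanish) as a consistency check. Everything else is an exact finite count with no asymptotics involved, so no analytic estimates are needed — the lemma is a purely combinatorial reflection-counting statement.
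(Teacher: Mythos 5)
Your proposal is correct and is essentially the paper's own argument: both decompose the lattice $\mathbb Z^2$ into the four open quadrants (contributing $4$ torus eigenfunctions per Neumann/Dirichlet eigenfunction), the two punctured axes (giving the $\left[\frac{at^{1/2}}{\pi}\right]$ and $\left[\frac{bt^{1/2}}{\pi}\right]$ corrections), and the origin (giving the constants). The only cosmetic difference is that you solve explicitly for the open-quadrant count and substitute, whereas the paper phrases the same bookkeeping as case-by-case corrections to $\frac14 N_T(t)$.
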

\begin{proof}
In the generic case $j>0$ and $k>0$, we have four eigenfunctions
$e(\pm j,\pm k)$ contributing to $N_T$ for one eigenfunction $c(j,k)$
or $s(j,k)$ contributing to $N_N$ or $N_D$, giving rise to the $\frac
1 4 N_T(t)$ terms in (2.4) and (2.5).  We then have to correct for the
nongeneric cases.  Note that $e(0,0)$ contributes once to $N_T$ and
$N_N$ but not to $N_D$, so this gives rise to the constant terms.
When $k=0$ and $j>0$, we have $e(\pm j, 0)$ for $\frac {\pi^2j^2}
{a^2}\le t$ contributing to $N_T(t)$ and $c(j,0)$ contributing to
$N_N(t)$ but not $N_D(t)$, so this gives rise to the term $\pm\frac 1
2 \left[\frac {at^{1/2}} \pi\right]$ in (2.4) and (2.5).  Similarly
the case $k>0$ and $j=0$ gives rise to the term $\pm\frac 1 2
\left[\frac {bt^{1/2}}\pi\right]$.
\end{proof}

Note that the function $[x]$ is on average $x-\frac 1 2$, so we may
rewrite (2.4) and (2.5) as
\beq N_N(t)=\frac 1 4 N_T(t)+\frac 1 2 \left ( \left[\frac { a
t^{1/2}} \pi \right] + \frac 1 2 \right ) + \frac 1 2 \left (
\left[\frac { b t^{1/2}} \pi \right] + \frac 1 2 \right ) + \frac 1 4
\eeq
\beq N_D(t)=\frac 1 4 N_T(t)-\frac 1 2 \left ( \left[\frac { a
t^{1/2}} \pi \right] + \frac 1 2 \right ) - \frac 1 2 \left (
\left[\frac { b t^{1/2}} \pi \right] + \frac 1 2 \right ) + \frac 1 4
\eeq
Now the constant term is the same in both equations.  We define the
refined asymptotics
\beq \widetilde N_N(t)=\frac {ab} {4\pi} t + \frac {2a+2b}{4\pi}
t^{1/2}+\frac 1 4 \eeq
\beq \widetilde N_D(t)=\frac {ab} {4\pi} t - \frac {2a+2b} {4\pi}
t^{1/2} + \frac 1 4.\eeq
Note that $ab$ is the area of $R$ and $2a+2b$ is the length of the
perimeter of $R$ as predicted, and
$\frac 1 4=\frac 1 {16} + \frac 1 {16} + \frac 1 {16} + \frac 1 {16}$
as predicted.  As usual we define the discrepancy $D(t)$
and average $A(t)$ in each of the three cases.
\begin{theorem} Both $A_N$ and $A_D$ satisfy
\beq A(t)=g(t^{1/2})t^{-1/4}+ O(t^{-1/2})\quad \text{as $t\rightarrow
\infty$}\eeq
where $g$ is an almost periodic function of mean value zero.\end{theorem}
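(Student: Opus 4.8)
The plan is to reduce the statement entirely to the torus case treated in [JS]. First I would subtract the refined asymptotics (2.8) from the exact formula (2.6). The refined asymptotics for the torus $T$ with sides $2a,2b$ is simply $\widetilde N_T(t)=\frac{ab}{\pi}t$ (it is flat, with no boundary, no corners and no cone points, so in the notation of the introduction $B=C=0$), so that $\frac14\widetilde N_T(t)$ supplies exactly the $t$-term of $\widetilde N_N(t)$ and the constant terms also match. Writing $\beta(x):=x-[x]-\tfrac12$ for the sawtooth function, so that $[x]+\tfrac12-x=-\beta(x)$, this yields the exact identity
\[
D_N(t)=\tfrac14 D_T(t)-\tfrac12\,\beta(at^{1/2}/\pi)-\tfrac12\,\beta(bt^{1/2}/\pi),
\]
and the same computation applied to (2.7) and (2.9) gives the analogous identity for $D_D(t)$ with the two sawtooth terms added rather than subtracted. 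Averaging over $[0,t]$,
\[
A_N(t)=\tfrac14 A_T(t)-\tfrac12 I_a(t)-\tfrac12 I_b(t),\qquad A_D(t)=\tfrac14 A_T(t)+\tfrac12 I_a(t)+\tfrac12 I_b(t),
\]
where $I_c(t):=\tfrac1t\int_0^t\beta(cs^{1/2}/\pi)\,ds$. So the natural candidate is $g=\tfrac14 g_T$, with $g_T$ the almost periodic function from the torus.

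The remaining steps are: (1) invoke the theorem of [JS] for the torus, which gives $A_T(t)=g_T(t^{1/2})t^{-1/4}+O(t^{-1/2})$ with $g_T$ uniformly almost periodic of mean value zero; (2) show $I_c(t)=O(t^{-1/2})$ for every fixed $c>0$; (3) conclude $A_N(t)=A_D(t)=\tfrac14 g_T(t^{1/2})t^{-1/4}+O(t^{-1/2})$ and observe that $g:=\tfrac14 g_T$ is again uniformly almost periodic with mean value zero, since multiplying by a constant preserves both properties. For step (2) I would substitute $s=u^2$ to write $I_c(t)=\tfrac2t\int_0^{t^{1/2}}u\,\beta(cu/\pi)\,du$. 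Because $\beta$ has mean value zero over a period, its antiderivative $B_1(y):=\int_0^y\beta(w)\,dw$ is continuous and periodic of period $1$, hence bounded; integrating by parts then gives $\int_0^{t^{1/2}}u\,\beta(cu/\pi)\,du=\tfrac{\pi}{c}t^{1/2}B_1(ct^{1/2}/\pi)-\tfrac{\pi}{c}\int_0^{t^{1/2}}B_1(cu/\pi)\,du$, both terms of which are $O(t^{1/2})$, so that $I_c(t)=O(t^{-1/2})$. (Equivalently one integrates the Fourier series $\beta(x)=-\sum_{n\ge1}(\pi n)^{-1}\sin 2\pi nx$ term by term, which is legitimate after one integration.)

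The hard part is, in a sense, outsourced: the argument rests entirely on the torus asymptotics of [JS], and everything else is elementary bookkeeping. Within that bookkeeping, the only step that is not completely routine is (2): the crude bound $|\beta|\le\tfrac12$ yields merely $A_N(t)=\tfrac14 A_T(t)+O(1)$, which is useless here, and one genuinely needs the cancellation in the sawtooth --- encoded in the boundedness of its periodic antiderivative --- to gain the extra factor $t^{-1/2}$ that pushes the correction terms strictly below the $t^{-1/4}$ main term. I would also note that for the statement (2.11) it is enough to know the corrections are $O(t^{-1/2})$, whereas recovering the full hierarchy (1.12) would require expanding $I_c(t)$ further --- keeping the bounded periodic function $B_1(ct^{1/2}/\pi)$ and the contribution of its period-mean (which equals $-\tfrac1{12}$) --- and checking that these pieces again assemble into almost periodic functions of $t^{1/2}$ carrying the predicted powers of $t$; the same substitution and integration by parts delivers them.
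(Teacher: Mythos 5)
Your proposal is correct and follows essentially the same route as the paper: decompose $D_N,D_D$ into $\tfrac14 D_T$ plus two sawtooth terms, invoke [JS] for $A_T$, and show the averaged sawtooth contributions are $O(t^{-1/2})$. The only cosmetic difference is that you establish the sawtooth bound by integration by parts against the bounded periodic antiderivative, whereas the paper computes the integral over each unit period exactly ($-\tfrac14$ per period) and bounds the final partial interval; both yield the same estimate.
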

\begin{proof} We have\\
\beq \left\{\begin{array}{c}D_N(t)\\D_D(t)\end{array}\right\}=\pm\frac
1 2 \left( \left[\frac {at^{1/2}} \pi \right]+\frac 1 2 - \frac
{at^{1/2}} \pi\right )\pm\frac 1 2 \left( \left[\frac {bt^{1/2}} \pi
\right]+\frac 1 2 - \frac {bt^{1/2}} \pi\right )+\frac 1 4 D_T(t).
\eeq\\
Since (2.10) holds for $A_T$ by Theorem 4 of [JS], it suffices to show
that an estimate like (2.10) holds for the function $[ct^{1/2}]+\frac
1 2 - ct^{1/2}$.  For simplicity take $c=1$.  We need to estimate
\beq
\begin{split}
\frac 1 t \int^t_0([s^{1/2}]+\frac 1 2 - s^{1/2})\,ds=\frac 2 t
\int^{\sqrt t}_0([r]+\frac 1 2 - r)r\,dr \\
=\frac 2 t \sum^{[t^{1/2}]}_{k=1}\int^k_{k-1} (k-\frac 1 2 -
r)\,r\,dr\,+\frac 2 t\int^{t^{1/2}}_{[t^{1/2}]}([t^{1/2}]+\frac 1 2 -
r)\,r\,dr.
\end{split}
\eeq
Now $\int^k_{k-1}(k-\frac 1 2 -r)\,r\,dr=-\frac 1 4$, so the first
term in (2.12) is exactly $\frac {-[t^{1/2}]} {2t}=O(t^{-1/2})$.  For
the second term we note that the integrand is $O(t^{1/2})$ and the
interval of integration has length at most 1, so again the
contribution is $O(t^{-1/2})$.\end{proof}

The almost periodic function $g$ is the same for $A_N$ and $A_D$, and
aside from the factor $1/4$ it is given explicitly in [JS].  The error
estimate $O(t^{-1/2})$ in (2.10) is somewhat worse than the estimate
$O(t^{-3/4})$ for $A_T$ given in [JS].  We may also regard (2.10) as
the first term in an asymptotic expansion with $O(t^{-1/2})$ replaced
by sums of $g_k(t^{1/2})t^{-k/4}$.  For odd values of $k$ the $g_k$
are almost periodic functions as given in [JS] arising from $\frac 1 2
A_T$.  The expression (2.12) may be written as
$g_2(t^{1/2})t^{-1/2}+g_4(t^{1/2})t^{-1}$ where $g_2$ and $g_4$ are
periodic of period 1.  This is easily seen because $h(x)=x-[x]$ is
such a periodic function, and (2.12) is a polynomial in $h(t^{1/2})$
and $t^{1/2}$ divided by $t$.\\

Next we consider a right isosceles triangle that is half of a square
of side length $a$.  Then Neumann and Dirichlet eigenfunctions extend
by even and odd reflection to eigenfunctions of the same type.
Continuing the same notation as before with $b=a$, the Neumann
eigenfunctions are
\beq
c(j,k)+c(k,j)\qquad \text{for $0\le j\le k$}
\eeq
and the Dirichlet eigenfunctions are
\beq
s(j,k)-s(k,j)\qquad \text{for $0<j<k$.}
\eeq

\begin{lemma} For the right isosceles triangle we have
\beq N_N(t)=\frac 1 8 N_T(t)+\frac 1 2 \left(\left[\frac {at^{1/2}}
{\pi}\right]+\frac 1 2\right)+\frac 1 2 \left(\left[\frac
{at^{1/2}}{\sqrt 2 \pi}\right]+\frac 1 2\right)+\frac 3 8
\eeq
\beq N_D(t)=\frac 1 8 N_T(t)-\frac 1 2 \left(\left[\frac {at^{1/2}}
{\pi}\right]+\frac 1 2\right)-\frac 1 2 \left(\left[\frac
{at^{1/2}}{\sqrt 2 \pi}\right]+\frac 1 2\right)+\frac 3 8
\eeq\end{lemma}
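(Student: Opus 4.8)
The plan is to mimic the bookkeeping of Lemma 2.1 for the rectangle, now counting which of the torus eigenfunctions $e(j,k)$ on the square torus $T$ of side $2a$ survive the symmetrization that produces the triangle. The square torus carries the action of the order‑8 dihedral symmetry group generated by the two coordinate reflections and the diagonal swap $(x,y)\mapsto(y,x)$; the right isosceles triangle is a fundamental domain, which is why the leading term is $\tfrac18 N_T(t)$. First I would write the eigenvalue of $e(j,k)$ as $\tfrac{\pi^2}{a^2}(j^2+k^2)$ and partition the lattice points $(j,k)\in\mathbb Z^2$ into orbits under this group. The generic orbit, with $0<j<k$ and $j\ne k$, has eight points and contributes exactly one Neumann eigenfunction $c(j,k)+c(k,j)$ and one Dirichlet eigenfunction $s(j,k)-s(k,j)$; these give the $\tfrac18 N_T$ terms in both (2.17) and (2.18).

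Next I would enumerate the degenerate orbits and track their contribution to $N_N$ versus $N_D$. There are three kinds of special points: (i) the origin $(0,0)$, a single point, contributing once to $N_N$ and not at all to $N_D$; (ii) the axis points, $(j,0)$ and its orbit $\{(\pm j,0),(0,\pm j)\}$ of size $4$ with $j>0$, for which the Neumann combination $c(j,0)+c(0,j)$ is nonzero but the Dirichlet combination $s(j,0)-s(0,j)$ vanishes identically; and (iii) the diagonal points $(j,j)$ and orbit $\{(\pm j,\pm j)\}$ of size $4$ with $j>0$, for which again the Neumann combination survives while the antisymmetric Dirichlet combination vanishes. Counting axis points with $\tfrac{\pi^2 j^2}{a^2}\le t$ gives $\bigl[\tfrac{a t^{1/2}}{\pi}\bigr]$ such $j$, each over‑counted $4$ times in $N_T$, producing the $\pm\tfrac12\bigl(\bigl[\tfrac{a t^{1/2}}{\pi}\bigr]+\tfrac12\bigr)$ term once one rebalances the constant as in the passage from (2.4)–(2.5) to (2.6)–(2.7). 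Counting diagonal points with $\tfrac{2\pi^2 j^2}{a^2}\le t$ gives $\bigl[\tfrac{a t^{1/2}}{\sqrt2\,\pi}\bigr]$ such $j$, producing the $\pm\tfrac12\bigl(\bigl[\tfrac{a t^{1/2}}{\sqrt2\,\pi}\bigr]+\tfrac12\bigr)$ term. Assembling the fractional pieces left over from the origin and the two half‑integer shifts yields the common additive constant $\tfrac38$ in both formulas; I would double‑check this against the predicted value of $C$ for a $45^\circ$–$45^\circ$–$90^\circ$ triangle as a sanity test, though the lemma as stated needs only the exact identities (2.17)–(2.18).

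The one genuinely delicate point is (iii): one must be certain that no additional degenerate orbit has been missed — in particular, the diagonal orbit $\{(\pm j,\pm j)\}$ is the fixed‑point set of the swap $(x,y)\mapsto(y,x)$, and its antisymmetrized Dirichlet eigenfunction $s(j,j)-s(j,j)=0$ vanishes, exactly as for the axis orbits under the relevant reflection. So the hard part is the careful orbit census: verifying that $(0,0)$, the axis points, and the diagonal points exhaust the non‑generic orbits, and that in each degenerate case the Dirichlet combination is identically zero while the Neumann combination is not (so that the two special terms enter $N_N$ with a plus sign and $N_D$ with a minus sign, symmetrically). Once that census is pinned down, the remaining manipulations — converting point counts to greatest‑integer functions of $a t^{1/2}/\pi$ and $a t^{1/2}/(\sqrt2\,\pi)$, and collecting constants — are the same routine arithmetic already carried out for the rectangle.
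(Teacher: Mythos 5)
Your proposal is correct and follows essentially the same orbit-counting argument as the paper: generic size-8 orbits give the $\frac 1 8 N_T$ term, while the origin, the axis orbits, and the diagonal orbits each contribute one eigenfunction to $N_N$ and nothing to $N_D$, producing the two greatest-integer corrections and the constant $\frac 3 8$. (Your assignment of the diagonal case to $N_N$ rather than $N_D$ is the right one --- the paper's proof text appears to have these swapped by a typo, but its formulas (2.15)--(2.16) agree with your accounting.)
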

\begin{proof}
In the generic case $0<j<k$ there are eight eigenfunctions $e(\pm
j,\pm k)$ and $e(\pm k,\pm j)$ contributing to $N_T$ for one
eigenfunction (2.13) or (2.14) contributing to $N_N$ or $N_D$.  The
case $(j,k)=(0,0)$ contributes the constant.  The case $k>j=0$
contributes $e(\pm k,0)$ and $e(0,\pm k)$ to $N_T$, but only
$c(k,0)+c(0,k)$ to $N_N$ and nothing to $N_D$, leading to the term
$\frac 1 2 \left[\frac {at^{1/2}} \pi\right]$ in (2.15) and (2.16).
When $j=k>0$ we have the eigenvalue $\frac {2\pi^2k^2}{a^2}$, with
$e(\pm k,\pm k)$ contributing to $N_T$, only $c(k,k)$ contributing to
$N_D$, and no contribution to $N_N$.  This leads to the term $\frac 1
2\left[\frac{at^{1/2}}{\sqrt 2\pi}\right]$ in (2.15) and
(2.16).\end{proof}

Thus we define the refined asymptotics
\beq \widetilde N_N(t)=\frac {a^2} {8\pi}t +\left(\frac {2+\sqrt
2}{4\pi}\right)at^{1/2}+\frac 3 8,\eeq
\beq \widetilde N_D(t)=\frac {a^2} {8\pi}t -\left(\frac {2+\sqrt
2}{4\pi}\right)at^{1/2}+\frac 3 8,\eeq

\begin{theorem} For the right isosceles triangle we have the estimate
(2.10) for $A_N$ and $A_D$
\end{theorem}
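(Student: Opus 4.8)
The plan is to run the argument of Theorem 2 essentially verbatim, with Lemma 3 in place of Lemma 1. The first step is to assemble the discrepancies $D_N$ and $D_D$. Subtracting the refined asymptotics (2.17) from (2.15) and (2.18) from (2.16), and using that the square torus $T$ of side $2a$ is flat and boundaryless, so that $\widetilde N_T(t)=\frac{a^2}{\pi}t$ has neither a $t^{1/2}$ term nor a constant term and hence $\frac18\widetilde N_T(t)=\frac{a^2}{8\pi}t$ matches the leading term of $\widetilde N_N$ and of $\widetilde N_D$ exactly, a short bookkeeping check — identical in spirit to the one producing (2.11), in which the constants $\frac38$, $\frac14$, $\frac12$ recombine cleanly — gives
\[
\left\{\begin{array}{c}D_N(t)\\ D_D(t)\end{array}\right\}
=\pm\frac12\left(\left[\frac{at^{1/2}}{\pi}\right]+\frac12-\frac{at^{1/2}}{\pi}\right)
\pm\frac12\left(\left[\frac{at^{1/2}}{\sqrt2\,\pi}\right]+\frac12-\frac{at^{1/2}}{\sqrt2\,\pi}\right)
+\frac18 D_T(t).
\]
Compared with (2.11) the one new feature is that the second sawtooth term carries the scaling constant $c=\frac{a}{\sqrt2\,\pi}$, which is not a rational multiple of $\frac a\pi$.

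The second step passes to the average. Here $A(t)=\frac1t\int_0^t D(s)\,ds$ splits into three pieces. The term $\frac18 D_T(t)$ contributes $\frac18 A_T(t)$, which by Theorem 4 of [JS] equals $\frac18 g_T(t^{1/2})\,t^{-1/4}+O(t^{-3/4})$; thus it already has the shape (2.10) with $g=\frac18 g_T$, a uniformly almost periodic function of mean value zero. Each sawtooth piece is handled by the computation in the proof of Theorem 2: though written there for $c=1$, that computation is valid for any fixed $c>0$ — after the substitutions $s=u^2$ and $r=cu$ one is left with $\frac{2}{c^2 t}\sum_{k=1}^{[c\sqrt t]}\int_{k-1}^k(k-\frac12-r)\,r\,dr$, each summand being exactly $-\frac14$, so this part equals $-\frac{[c\sqrt t]}{2c^2 t}=O(t^{-1/2})$, together with a boundary term over an interval of length at most $1$ with integrand $O(\sqrt t)$, again $O(t^{-1/2})$. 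Applying this with $c=\frac a\pi$ and with $c=\frac{a}{\sqrt2\,\pi}$ shows the two sawtooth pieces land entirely in the $O(t^{-1/2})$ error. Adding the three contributions yields (2.10) for both $A_N$ and $A_D$, with the common function $g=\frac18 g_T$.

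I do not expect a genuine obstacle: the whole argument reduces to the already-established behavior of $A_T$ together with an elementary sawtooth estimate that is insensitive to the value of $c$. The only point needing care is verifying that the constant terms in (2.17)–(2.18) are exactly those for which all the leftover constants in $D_N$ and $D_D$ are absorbed into the $\pm\frac12(\,\cdots+\frac12-\cdots\,)$ packaging, i.e.\ that the bookkeeping in the first step really does close — the same small bookkeeping miracle already seen for the rectangle. One may also remark, as after Theorem 2, that because $\frac a\pi$ and $\frac{a}{\sqrt2\,\pi}$ are incommensurable the combined sawtooth contribution, after rescaling by $t^{1/2}$, is genuinely almost periodic rather than periodic; since it lies inside the error term, though, this is irrelevant for (2.10) as stated.
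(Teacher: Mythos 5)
Your proposal is correct and is essentially the paper's own argument: the paper's proof of this theorem is literally ``Same as for Theorem 2.2,'' i.e.\ decompose $D_N$, $D_D$ into $\tfrac18 D_T$ plus two sawtooth terms via Lemma 2.3 and apply the sawtooth average estimate from the proof of Theorem 2.2 (which, as you note, works for any fixed $c>0$, not just $c=1$). Your explicit verification that the constants close and that the estimate is insensitive to $c$ just fills in details the paper leaves implicit.
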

\begin{proof} Same as for Theorem 2.2.
\end{proof}

Note that $\frac 3 8=\frac 1 {16} + \frac 5 {32} + \frac 5 {32}$ as
predicted for the angles $\frac \pi 2$, $\frac \pi 4$, $\frac \pi
4$.\\

Next we consider the equilateral triangle.  For simplicity we assume
the side length is 1.  Six equilateral triangles tile a regular
hexagon, (see figure 2.1), and Dirichlet and Neumann eigenfunctions
extended by even or odd reflection extend to periodic functions on the
plane with respect to the lattice associated to the hexagonal torus
$T$.

\begin{figure}[h]
\centering 
\includegraphics[scale=.6]{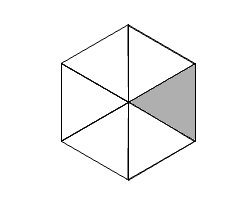}\caption{}
\end{figure}
\noindent We may choose a basis $(\sqrt 3, 0)$ and $(\frac {\sqrt 3} 2,\frac 3
2)$ for the lattice $\mathscr L$, and a basis $u=(\frac 1 {\sqrt 3},
\frac 1 3)$ and $v=(0,\frac 2 3)$ for the dual lattice $\mathscr L^*$.
 Then the torus eigenfunctions are of the form
\beq \widetilde e(k,j)=e^{2\pi(ku+jv)\cdot x}\qquad k,j\in\mathbb Z\eeq
with eigenvalue $\left(\frac {4\pi} 3\right)^2(k^2+j^2+kj).$\\

To describe the Neumann and Dirichlet eigenfunctions on the triangle
it is convenient to think of the dual lattice $\mathscr L^*$ as made
up of the origin surrounded by concentric hexagons.  In Figure 2.2 we
show one such hexagon with a generic choice $(k>j>0)$ of twelve points
associated to the same eigenvalue, together with three reflection axes
of the triangle sides.

\begin{figure}[h!]
\centering
\includegraphics[scale=.6]{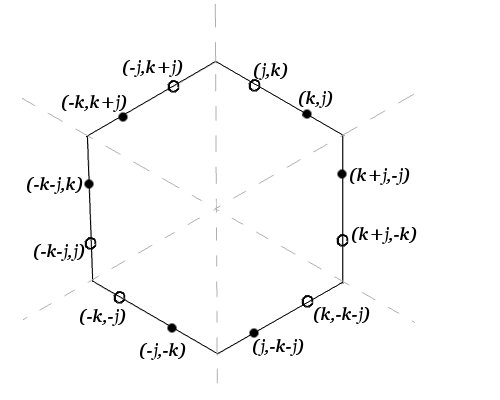}\caption{}
\end{figure}
\noindent A Neumann eigenfunction must be symmetric with respect to the three
reflections, so there are two eigenfunctions associated to these
lattice points, namely
\beq \begin{split}\widetilde e(k,j)+\widetilde e(-k,k+j)+\widetilde
e(-k-j,k)+\widetilde e(-j,-k)\left.\,\,\,\,\right.\\+\widetilde
e(j,-k-j)+\widetilde e(k+j,-j)\end{split}\eeq
(dark points) and the same with $j$ and $k$ interchanged (open
points).  Similarly a Dirichlet eigenfunction must be skew-symmetric,
so again we find two,
\beq \begin{split}\widetilde e(k,j)-\widetilde e(-k,k+j)+\widetilde
e(-k-j,k)-\widetilde e(-j,-k)\left.\,\,\,\,\right.\\+\widetilde
e(j,-k-j)-\widetilde e(k+j,-j)\end{split}\eeq
and the same with $j$ and $k$ interchanged.  For the nongeneric cases
we have $(0,0)$ contributing to $N_T$ and $N_N$ but not $N_D$, $(k,k)$
contributing six terms to $N_T$ and one each to $N_N$ and $N_D$, and
$(k,0)$ contributing six terms to $N_T$ and two terms to $N_N$, namely
$\widetilde e(k,0)+\widetilde e(-k,k)+\widetilde e(0,-k)$ and
$\widetilde e(-k,0)+\widetilde e(k,-k)+\widetilde e(0,k)$ and nothing
to $N_D$.  This yields
\beq N_N(t)=\frac 1 6 N_T(t)+\left(\left[\frac 3 {4\pi}
t^{1/2}\right]+\frac 1 2\right) + \frac 1 3,
\eeq
\beq N_D(t)=\frac 1 6 N_T(t)-\left(\left[\frac 3 {4\pi}
t^{1/2}\right]+\frac 1 2\right) + \frac 1 3.\eeq
Thus we define the refined asymptotics
\beq \widetilde N_N(t)=\frac 1 {4\pi}\frac {\sqrt 3} 4 t + \frac 3
{4\pi} t^{1/2}+\frac 1 3,
\eeq
\beq \widetilde N_D(t)=\frac 1 {4\pi}\frac {\sqrt 3} 4 t - \frac 3
{4\pi} t^{1/2}+\frac 1 3,
\eeq
and the analog of Theorem 2.2 is valid.  Note that $\frac 1 3=\frac 1
9 +\frac 1 9 +\frac 1 9$ as predicted.\\

Finally, we consider the $30^\circ-60^\circ-90^\circ$ triangle that is
half of the equilateral triangle.  For Neumann eigenfunctions that
means we take Neumann eigenfunctions on the equilateral triangle that
are symmetric with respect to reflection in the $x$-axis, and
similarly for Dirichlet eigenfunctions with skew-symmetry.  For a
generic choice $k>j>0$ we get exactly one eigenfunction of each type
by taking the sum of the two eigenfunctions of the form (2.20) or
(2.21), with $j$ and $k$ interchanged.   For the nongeneric cases we
have $(0,0)$ contributing to $N_T$ and $N_N$ but not $N_D$, $(k,k)$
contributing six terms to $N_T$, one term to $N_N$ and nothing to
$N_D$, and $(k,0)$ contributing six terms to $N_T$, one term to $N_N$
and nothing to $N_D$.  Thus
\beq \begin{split} N_N(t)=\frac 1 {12} N_T(t)+\frac 1 2
\left(\left[\frac {\sqrt 3} {4\pi} t^{1/2}\right]+\frac 1 2\right)
+\frac 1 2 \left (\left [ \frac 3 {4\pi} t^{1/2}\right]+\frac 1 2
\right) + \frac 5 {12},
\end{split}\eeq
\beq \begin{split} N_D(t)=\frac 1 {12} N_T(t)-\frac 1 2
\left(\left[\frac {\sqrt 3} {4\pi} t^{1/2}\right]+\frac 1 2\right)
-\frac 1 2 \left (\left [ \frac 3 {4\pi} t^{1/2}\right]+\frac 1 2
\right) + \frac 5 {12}.
\end{split}\eeq
Thus the analog of Theorem 2.2 holds for
\beq \widetilde N_N(t)=\frac 1 {4\pi} \frac {\sqrt 3} 8 t +
\frac{3+\sqrt 3} {8\pi} t^{1/2}+\frac 5 {12},\eeq
\beq \widetilde N_D(t)=\frac 1 {4\pi} \frac {\sqrt 3} 8 t -
\frac{3+\sqrt 3} {8\pi} t^{1/2}+\frac 5 {12}.\eeq
Note that $\frac {3+\sqrt 3} 2$ is the length of the perimeter of the
triangle, and $\frac 5 {12}= \frac 1 {16}+\frac 1 {9}+\frac {35}
{144}$ as predicted.

\section{Mixed Boundary Conditions}
In this section we look at examples of polygons with Neumann
conditions on part of the boundary and Dirichlet conditions on the
rest of the boundary.\\

We begin with the simplest case: a rectangle $R$ with Neumann
conditions on facing edges of length $a$ and Dirichlet conditions on
the facing edges of length $b$.  Just as in the pure Neumann and
Dirichlet example in section 2 we may extend eigenfunctions to the
same torus $T$, and so the eigenfunctions have the form
\beq f(j,k)=\cos\pi\frac j a x\sin\pi\frac k b y\qquad \text{for
$j\ge0$ and $k> 0$,}
\eeq

with eigenvalue $\frac {\pi^2 j^2} {a^2} + \frac {\pi^2k^2} {b^2}$.
Just as in Lemma 2.1 we have
\beq \begin{split} N(t)=\frac 1 4  N_T(t)+\frac 1 2 \left[\frac
{at^{1/2}} \pi\right]-\frac 1 2 \left[\frac{bt^{1/2}} \pi\right] -
\frac 1 4\left.\,\,\,\,\,\,\right.\\
 =\frac 1 4  N_T(t)+\frac 1 2 \left(\left[\frac {at^{1/2}}
\pi\right]+\frac 1 2\right)-\frac 1 2 \left(\left[\frac{bt^{1/2}}
\pi\right]+\frac 1 2\right) - \frac 1 4,
\end{split}\eeq

and so we define the refined asymptotics
\beq \widetilde N(t)=\frac {ab} {4\pi}t+\frac {2a-2b}{4\pi}t^{1/2}-\frac 1 4.
\eeq

\noindent Then the analog of Theorem 2.2 holds.  Note that $2a-2b$
gives the difference of the lengths of the portions of the perimeter
where Neumann and Dirichlet boundary conditions hold, and the constant
is $-\frac 1 4$ because each of the four vertices has mixed boundary
conditions on the incident edges, with $\psi(\pi)-\psi(\frac \pi
2)=-\frac 1 {16}$.\\

Next we consider the case where the same type of boundary conditions
hold on one pair of opposite edges (say the ones of length $b$), while
for the other pair we have Neumann on one side and Dirichlet on the
other.  We call these the NM (Neumann/mixed) and DM (Dirichlet/mixed)
cases.  Here we need a larger torus $T'$ of size $2a\!\!\times\!\!4b$
on which to extend the eigenfunctions.  The eigenfunctions then have
the form
\beq f(j,k)=\left\{\begin{array}{rl} \cos\pi\frac j a
x\cos\pi\left(\frac {k+\frac 1 2} b\right)y & j\ge 0,k\ge 0\quad\text{
 (NM)}\\
\sin\pi\frac j a x\cos\pi\left(\frac {k+\frac 1 2} b\right)y &
j>0,k\ge 0\quad\text{  (DM)}
\end{array}\right.
\eeq
with eigenvalue
\beq \frac {\pi^2j^2} {a^2} + \frac {\pi^2 (2k+1)^2} {4b^2}.
\eeq

Now we observe that $N_{T'}(t)$ also has eigenvalues $\frac {\pi^2j^2}
{a^2} + \frac {\pi^2 (2k)^2} {4b^2}$, so $N_{T'}(t)-N_T(t)$ counts all
eigenvalues of the form (3.5) with multiplicity $4$ for the generic
case $j>0$ and 2 for the case $j=0$.  Thus
\beq N(t)=\frac 1 4\left(N_{T'}(t)-N_T(t)\right)\pm\frac 1 2
\left[\frac b \pi t^{1/2}+\frac 1 2\right]
\eeq
($+$ for NM and $-$ for DM cases),
since the condition $\frac {\pi^2 (2k+1)^2} {4b^2}\le t$ means $0\le
k\le \left[\frac b \pi t^{1/2}-\frac 1 2\right]$.  Thus we choose the
refined asymptotics
\beq \left\{\begin{split}\widetilde N_{NM}(t)=\frac {ab} {4\pi}t+\frac
{2b} {4\pi} t^{1/2}\\
\widetilde N_{DM}(t)=\frac {ab} {4\pi}t-\frac {2b} {4\pi} t^{1/2},
\end{split}\right.\eeq
and the analog of Theorem 2.2 holds.  Indeed $\widetilde
N_{T'}(t)-\widetilde N_T(t)=\frac {8ab} {4\pi} t-\frac {4ab} {4\pi}t$,
and
$\left[\frac b \pi t^{1/2}+\frac 1 2\right]-\frac b \pi
t^{1/2}=O(t^{-1/2})$ as in the proof of Theorem 2.2.  The explanation
for the coefficient of $t^{1/2}$ in (3.7) is that the sides of length
$a$ cancel because they have mixed boundary conditions, while the
sides of length $b$ add because they have like boundary conditions.
There is no constant term because there are two vertices with mixed
boundary conditions and two with like boundary conditions on their
incident edges.\\

The last case of mixed boundary conditions on $R$ has mixed conditions
on both pairs of opposite edges (MM).  In this case we need a still
larger torus $T''$ of size $4a\!\times\!4b$ on which to extend the
eigenfunctions.  Then the eigenfunctions have the form
\beq
f(j,k)=\cos\pi\left(\frac {j+\frac 1 2} a\right)x\cos\pi\left(\frac
{k+\frac 1 2} b\right)y\qquad\text{for $j\ge 0, k\ge0$}
\eeq with eigenvalue
\beq \frac {\pi^2 (2j+1)^2} {4a^2}+\frac {\pi^2 (2k+1)^2} {4b^2}.
\eeq

If we denote by $T'_1$ and $T'_2$ the tori of sizes $2a\!\times \!4b$
and $4b\!\times \!2a$, then $N_{T''}-N_{T'_1}-N_{T'_2}+N_T$ counts all
eigenvalues of the form (3.9) with multiplicity 4, and all eigenvalues
are generic.  So
\beq N_{MM}(t)=\frac 1 4 \left( N_{T''}(t)-N_{T'_1}(t)-N_{T'_2}(t)+N_T(t)\right)
\eeq
Thus we define the refined asymptotics
\beq \widetilde N_{MM}(t)=\frac {ab} {4\pi}t,
\eeq
and the analog of Theorem 2.2 holds because $\frac 1 4 \left(
\widetilde N_{T''}(t)-\widetilde N_{T'_1}(t)-\widetilde
N_{T'_2}(t)+\widetilde N_T(t)\right)=\widetilde N_{MM}(t)$.  Here
there is no $t^{1/2}$ term in (3.11) because of cancellation of
opposite sides, and no constant term because of cancellation of vertex
pairs.\\

Next we look at flat cylinders $C$ obtained from the rectangles $R$ by
identifying the opposite edges of side length $b$.  We impose either
Neumann (N), Dirichlet (D) or mixed (M) boundary conditions on the
other pair of opposite edges.  Then reflection produces eigenfunctions
on the torus $T'_3$ of size $a\!\times\!2b$ (N or D) or $T'_4$ of size
$a\!\times\!4b$ (M).  The eigenfunctions are given by
\beq f(j,k)=\left\{\begin{array}{lll} e^{2\pi i\frac j a
x}\cos\pi\frac k b y&\text{ for $j\in\mathbb Z, k\ge 0$}& (\text{N})\\
e^{2\pi i\frac j a x}\sin\pi\frac k b y&\text{ for $j\in\mathbb Z, k>
0$}& (\text{D})\\
e^{2\pi i\frac j a x}\cos\pi\left(\frac {k+\frac 1 2} b\right)
y&\text{ for $j\in\mathbb Z, k\ge 0$}& (\text{M})\\
\end{array}\right.
\eeq
with eigenvalue
\beq \left\{\begin{array}{ll}\frac {4\pi^2j^2} {a^2} +\frac {\pi^2k^2}
{b^2}& (\text{N or D})\vspace{3mm}\\
\frac {4\pi^2j^2} {a^2} +\frac {\pi^2(2k+1)^2} {4b^2}& (\text{M}).\\
\end{array}\right.
\eeq
Reasoning as before we find
\beq \left\{\begin{array}{l}
N_N(t)=\frac 1 2 N_{T'_3}(t)+\left[\frac {at^{1/2}}
{2\pi}\right]+\frac 1 2\vspace{3mm}\\
N_D(t)=\frac 1 2 N_{T'_3}(t)-\left[\frac {at^{1/2}}
{2\pi}\right]-\frac 1 2\vspace{3mm}\\
N_M(t)=\frac 1 2 \left(N_{T'_4}(t)-N_{T'_3}(t)\right).
\end{array}\right.
\eeq
We define the refined asymptotics
\beq \left\{\begin{array}{l}
\widetilde N_N(t)=\frac {ab} {4\pi}t+\frac {2a} {4\pi}t^{1/2}\vspace{2mm}\\
\widetilde N_D(t)=\frac {ab} {4\pi}t-\frac {2a} {4\pi}t^{1/2}\vspace{2mm}\\
\widetilde N_M(t)=\frac {ab} {4\pi}t,
\end{array}\right.
\eeq
and the analog of Theorem 2.2 holds.\\

Next we consider a M\"obius band obtained from the rectangle $R$ by
identifying the sides of length $a$ with reversed orientation, with
either Neumann or Dirichlet boundary condition on the boundary curve
of length $2b$.  We can extend eigenfunctions to the torus $T$, and we
find they have the form

\beq f(j,k)=\left\{\begin{array}{lll} e^{\pi i\frac j a x}\cos\frac
{\pi k} b y&\text{ if $k\ge 0$ and $j+k$ is even}& (\text{N})\\
e^{\pi i\frac j a x}\sin\frac {\pi k} b y&\text{ if $k> 0$ and $j+k$
is odd}& (\text{D})\\
\end{array}\right.
\eeq
with eigenvalue $\frac {\pi^2j^2}{a^2}+\frac {\pi^2k^2}{b^2}$.
Consider the lattice counting function
\beq \left\{\begin{array}{l} N_e(t)=\#\{(j,k)\,:\,j+k\text{ is even
and }\frac {\pi^2j^2}{a^2}+\frac{\pi^2k^2}{b^2}\le t\}\vspace{2mm}\\
N_o(t)=\#\{(j,k)\,:\,j+k\text{ is odd and }\frac
{\pi^2j^2}{a^2}+\frac{\pi^2k^2}{b^2}\le t\}.
\end{array}\right.
\eeq
Reasoning as before we find
\beq \left\{\begin{array}{l}N_N(t)=\frac 1 2
N_e(t)+\left[\frac{at^{1/2}}{2\pi}\right]+\frac 1 2\vspace{2mm}\\
N_D(t)=\frac 1 2 N_o(t)-\left[\frac{at^{1/2}}{2\pi}+\frac 1 2\right].
\end{array}\right.
\eeq
Now $N_e(t)$ is equal to the counting function corresponding to a
torus $T'_5$ of area $2ab$, and $N_o(t)=N_T(t)-N_{T'_5}(t)$, so both
$\frac 1 2 N_e(t)$ and $\frac 1 2 N_o(t)$ are covered by the results
of [JS].  Thus we define the refined asymptotics
\beq \left\{\begin{array}{l}\widetilde N_N(t)=\frac {ab} {4\pi}t+\frac
{2a} {4\pi} t^{1/2}\\
\widetilde N_D(t)=\frac {ab} {4\pi}t-\frac {2a} {4\pi} t^{1/2},
\end{array}\right.
\eeq
with the analog of Theorem 2.2 holding.  Note that the refined
asymptotics of the M\"obius band is identical to that of the cylinder,
although the eigenvalue counting functions are not equal, and the
almost-periodic function $g$ is not the same.\\

Next we consider the isosceles triangle with mixed boundary
conditions.  There are essentially four different cases.  In the first
two we consider the same type of boundary condition on the equal
sides, and the opposite type on the hypotenuse.  We write (ND) for
Neumann on the equal sides, and (DN) for Dirichlet on the equal sides.
 In the ND case we take an odd reflection in the hypotenuse to obtain
a pure Neumann condition on the square.  If we compare with Lemma 2.3
where we took an even reflection, we see that the eigenfunctions on
the square with Neumann conditions split into the mixed ND
eigenfunctions and the pure N eigenfunctions, so
\beq N_{ND}(t)+N_N(t)=N_N^{(S)}(t),\eeq
where the right side denotes the counting function for the square.  Similarly
\beq N_{DN}(t)+N_D(t)=N_D^{(S)}(t).\eeq
Thus, $N_{ND}(t)$ is given by the difference of (2.6) (with $b=a$) and
(2.15), so
\beq N_{ND}(t)=\frac 1 8 N_T(t)+\frac 1 2 \left(\left[\frac {at^{1/2}}
\pi\right]+\frac 1 2\right)-\frac 1 2\left(\left[\frac {at^{1/2}}
{\sqrt 2 \pi}\right]+\frac 1 2\right)-\frac 1 8.
\eeq
Similarly $N_{DN}$ is the difference of (2.7) and (2.15), so
\beq N_{DN}(t)=\frac 1 8 N_T(t)-\frac 1 2 \left(\left[\frac {at^{1/2}}
\pi\right]+\frac 1 2\right)+\frac 1 2\left(\left[\frac {at^{1/2}}
{\sqrt 2 \pi}\right]+\frac 1 2\right)-\frac 1 8.
\eeq
Here the constant is $\psi(\frac \pi 2)+2\left(\psi(\frac \pi
2)-\psi(\frac \pi 4)\right)=\frac 3 {16} - 2(\frac 5 {32})=-\frac 1
8.$\\

This leads to the choice
\beq \widetilde N_{ND}(t)=\frac {a^2} {8\pi}t+\left(\frac { 2-\sqrt
2}{4\pi}\right)at^{1/2}-\frac 1 8,
\eeq
\beq \widetilde N_{DN}(t)=\frac {a^2} {8\pi}t+\left(\frac {-2+\sqrt
2}{4\pi}\right)at^{1/2}-\frac 1 8,
\eeq
and the analog of Theorem 2.4 holds.\\

In the last two cases we have mixed boundary conditions on the two
equal sides and Neumann (MN) or Dirichlet (MD) conditions on the
hypotenuse.  By reflecting evenly or oddly in the hypotenuse we end up
in the (MM) case for the square.  As in the earlier cases the (MM)
eqigenfunctions on the square (3.8) yield the (MN) eigenfunctions
$f(j,k)+f(k,j)$ and the (MD) eigenfunctions $f(j,k)-f(k,j)$  $(k\ne
j)$.  In the place of (3.20) and (3.21) we have
\beq N_{MN}(t)+N_{MD}(t)=N_{MM}^{(S)}(t),\eeq
but this does not allow us to immediately compute the two summands.
However, in the generic case $j\ne k$ we obtain one eigenfunction in
each case for two in the square, while in the nongeneric case $j=k$ we
obtain one in the (MN) case and none in the (MD) case for one in the
square.  Thus
\beq N_{MN}(t)=\frac 1 2 N^{(S)}_{MM}(t)+\frac 1 2 \left[\frac {\sqrt
2 a}{2\pi} t^{1/2}+\frac 1 2\right]
\eeq
and
\beq N_{MD}(t)=\frac 1 2 N^{(S)}_{MM}(t)-\frac 1 2 \left[\frac {\sqrt
2 a}{2\pi} t^{1/2}+\frac 1 2\right].
\eeq
This leads to the choice
\beq \widetilde N_{MN}(t)=\frac{a^2} {8\pi}t+\left(\frac {\sqrt 2}
{4\pi}\right)at,
\eeq
\beq \widetilde N_{MD}(t)=\frac{a^2} {8\pi}t-\left(\frac {\sqrt 2}
{4\pi}\right)at,
\eeq
with the analog of Theorem 2.4 holding.  Here the constant is
$\left(\psi(\pi)-\psi(\frac \pi 2)\right)+\psi(\frac\pi
4)+\left(\psi(\frac \pi 2)-\psi(\frac \pi 4)\right)=0.$\\

In our last example we consider two mixed boundary problems on the
$30^\circ-60^\circ-90^\circ$ triangle with the same type of boundary
condition on the hypotenuse and the shortest side, and the opposite
type on the side that cuts the equilateral triangle in half that we
call (ND) and (DN).  In the (ND) case an odd reflection across the cut
side produces a Neumann boundary condition on the equilateral
triangle, so these eigenfunctions together with the pure Neumann
eigenfunctions split up the pure Neumann eigenfunctions on the
equilateral triangle.  Thus $N_{ND}(t)$ is given by the difference of
(2.22) and (2.26), hence
\beq\begin{split}
N_{ND}(t)=\frac 1 6 N_T(t)+\left(\left[\frac 3 {4\pi}
t^{1/2}\right]+\frac 1 2\right)+\frac 1 3\hspace{1cm}\\
-\frac 1 {12} \left(N_T(t)+\frac 1 2 \left(\left[\frac 3
{4\pi}t^{1/2}\right]+\frac 1 2 \right)+\frac 1 2
\left(\left[\frac{\sqrt 3} {4\pi} t^{1/2}\right]+\frac 1
2\right)+\frac 5 {12}\right)\\
=\frac 1 {12} N_T(t)+\frac 1 2 \left(\left[\frac 3 {4\pi}
t^{1/2}\right]+\frac 1 2 \right)-\frac 1 2 \left(\left[\frac {\sqrt3}
{4\pi} t^{1/2}\right]+\frac 1 2 \right)-\frac 1 {12}.
\end{split}
\eeq

Similarly $N_{DN}(t)$ is the difference of (2.23) and (2.27), hence
\beq
N_{DN}(t)=\frac 1 {12} N_T(t)-\frac 1 2 \left(\left[\frac 3 {4\pi}
t^{1/2}\right]+\frac 1 2 \right)+\frac 1 2 \left(\left[\frac {\sqrt 3}
{4\pi} t^{1/2}\right]+\frac 1 2\right)-\frac 1 {12}.
\eeq

This leads to the choice
\beq
\widetilde N_{ND}(t)=\frac 1 {4\pi} \frac {\sqrt 3} 8 t+\frac {3-\sqrt
3} {8\pi} t^{1/2}-\frac 1 {12},
\eeq

\beq
\widetilde N_{DN}(t)=\frac 1 {4\pi} \frac {\sqrt 3} 8 t+\frac
{-3+\sqrt 3} {8\pi} t^{1/2}-\frac 1 {12},
\eeq

with the analog of Theorem 2.2 holding.  Here the constant is
$\left(\psi(\pi)-\psi(\frac \pi 2)\right)+\psi(\frac\pi
3)+\left(\psi(\frac \pi 3)-\psi(\frac \pi 6)\right)=-\frac 1
{16}+\frac 2 9-\frac {35} {144}=-\frac 1 {12}.$

It does not appear that mixed boundary problems on the equilateral
triangle or the other types of mixed boundary problems on the
$30^\circ-60^\circ-90^\circ$ triangle have eigenfunctions that may all
be described by trigonometric polynomials.\\

\section{The Sphere}

We consider the 2-dimensional sphere $S^2$, and for simplicity we take
the radius equal to 1.  The eigenvalues of the Laplacian are given by
the theory of spherical harmonics, with eigenvalue $k(k+1)$ having
multiplicity $2k+1$, for $k=0,1,\ldots$  The eigenfunctions are the
restriction to $S^2$ of homogeneous harmonic polynomials of degree
$k$.  We thus have
\beq N(t)=k^2\quad\text{for}\quad k^2-k\le t<k^2+k,
\eeq
which we simplify to
\beq N(t)=\left[\sqrt{t+\frac 1 4}+\frac 1 2\right]^2.
\eeq
Now we define the refined asymptotics
\beq \widetilde N(t)=t+\frac 1 3.
\eeq
\begin{lemma} For $k^2-k\le t<k^2+k$ we have
\beq A(t)=\frac 1 {2t} \left( k^2-(k^2-t)^2\right)-\frac 1 3
\eeq
\end{lemma}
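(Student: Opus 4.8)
The plan is to compute $\int_0^t D(s)\,ds$ directly, since on the interval $[0,t)$ both $N$ and $\widetilde N$ are completely explicit. By (4.1), $N$ is the step function equal to $j^2$ on each interval $[j^2-j,\,j^2+j)$, and since $j^2+j=(j+1)^2-(j+1)$ these intervals abut exactly, the $j$-th one having length $2j$. Hence for $t$ in the range $k^2-k\le t<k^2+k$ (so $k\ge1$), the half-open interval $[0,t)$ is the disjoint union of the full steps for $1\le j\le k-1$, which together fill $[0,k^2-k)$, and the partial step $[k^2-k,\,t)$.

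First I would evaluate
\[
\int_0^t N(s)\,ds=\sum_{j=1}^{k-1}2j\cdot j^2+k^2\bigl(t-(k^2-k)\bigr)
=2\sum_{j=1}^{k-1}j^3+k^2(t-k^2+k).
\]
Using $\sum_{j=1}^{k-1}j^3=\bigl(\tfrac{(k-1)k}{2}\bigr)^2$ and simplifying, this equals $k^2 t+\tfrac12(k^2-k^4)$. From (4.3) one has $\int_0^t\widetilde N(s)\,ds=\tfrac{t^2}{2}+\tfrac t3$, so subtracting and dividing by $t$ gives
\[
A(t)=k^2+\frac{k^2-k^4}{2t}-\frac t2-\frac13.
\]

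It then remains only to recognize this as the asserted closed form: expanding $(k^2-t)^2=k^4-2k^2t+t^2$ shows $\tfrac1{2t}\bigl(k^2-(k^2-t)^2\bigr)=k^2-\tfrac t2+\tfrac{k^2-k^4}{2t}$, so the displayed expression is exactly $\tfrac1{2t}\bigl(k^2-(k^2-t)^2\bigr)-\tfrac13$, which is (4.4).

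There is no real obstacle; the only points demanding care are that the list of full steps starts at $j=1$ (consistent with $N(0)=1$, coming from the eigenvalue $\lambda=0$) and that the steps tile $[0,\infty)$ with no gaps or overlaps, so nothing is miscounted in the partial last interval. As a quick sanity check, at each break point $t=k^2\pm k$ the formula gives $A(t)=-\tfrac13$, which matches the value produced by the formula for the adjacent value of $k$ and reflects the continuity of $A$.
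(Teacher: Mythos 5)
Your proof is correct and follows essentially the same route as the paper: both decompose $[0,t)$ into the full steps $[j^2-j,\,j^2+j)$ for $j<k$ plus the final partial step and integrate $N-\widetilde N$ over them. The only difference is bookkeeping — the paper observes that $\int_{j^2-j}^{j^2+j}(j^2-s)\,ds=0$ by symmetry about the midpoint, so only the last partial interval contributes, whereas you evaluate the full-step sum explicitly via $\sum j^3$; both land on the same computation of $\frac1t\int_{k^2-k}^t(k^2-s)\,ds-\frac13$.
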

\begin{proof}
$\displaystyle{A(t)=\frac 1 t \int_0^t(N(s)-s)\,ds-\frac 1 3.}$
Now for $j<k$ we observe that\\
$\displaystyle{\int_{j^2-j}^{j^2+j}(N(s)-s)\,ds=\int_{j^2-j}^{j^2+j}(j^2-s)\,ds=0}$
because $j^2-s$ is a linear function on the interval varying between
$j$ and $-j$.  Thus $\displaystyle{A(t)=\frac 1 t
\int_{k^2-k}^t(k^2-s)\,ds-\frac 1 3}$, and when we evaluate the
integral we obtain (4.4).
\end{proof}

\begin{theorem} Let
\beq g(x)=\frac 1 6-2(x-[x+\frac 1 2])^2.
\eeq
Then $g$ is a continuous periodic function of period 1 with mean value zero, and
\beq A(t)=g(\sqrt{t+\frac 1 4})+O(t^{-1/2}).
\eeq
\end{theorem}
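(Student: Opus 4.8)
The plan is to read the theorem off the exact formula of Lemma 4.2 after the change of variable $x=\sqrt{t+\tfrac14}$. First I would dispose of the elementary claims about $g$. The function $x\mapsto x-[x+\tfrac12]$ is the signed distance from $x$ to the nearest integer; it jumps by $1$ at each half-integer, but its value there is $\pm\tfrac12$ on the two sides of the jump, so its square — and hence $g$ — is continuous. Periodicity of period $1$ is immediate from $[x+1+\tfrac12]=[x+\tfrac12]+1$. On one period we may write $x-[x+\tfrac12]=u$ with $u$ running over $[-\tfrac12,\tfrac12)$, and $\int_{-1/2}^{1/2}\big(\tfrac16-2u^2\big)\,du=\tfrac16-\tfrac16=0$, so the mean value is zero.

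Next I would match the variables. The window $k^2-k\le t<k^2+k$ is equivalent to $(k-\tfrac12)^2\le t+\tfrac14<(k+\tfrac12)^2$, i.e. $k-\tfrac12\le x<k+\tfrac12$; hence $k$ is the nearest integer to $x$, $k=[x+\tfrac12]$, and $u:=x-k=x-[x+\tfrac12]\in[-\tfrac12,\tfrac12)$ is exactly the quantity occurring in $g$. Substituting $t=x^2-\tfrac14$ and $k=x-u$ into Lemma 4.2, a direct expansion gives
\[
k^2-(k^2-t)^2=(1-4u^2)x^2-(1-4u^2)ux+\Big(u^2-(u^2+\tfrac14)^2\Big),
\qquad 2t=2x^2-\tfrac12 .
\]
Dividing, the leading term is $(1-4u^2)\dfrac{x^2}{2x^2-1/2}=\dfrac{1-4u^2}{2}+O(x^{-2})$, the middle term is $O(x^{-1})$, and the bracketed constant contributes $O(x^{-2})$, all uniformly in $u$ since $|u|\le\tfrac12$. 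Therefore $A(t)+\tfrac13=\tfrac12-2u^2+O(x^{-1})$, so $A(t)=\tfrac16-2u^2+O(x^{-1})=g(x)+O(t^{-1/2})$ because $x^{-1}=O(t^{-1/2})$.

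The computation is essentially mechanical once Lemma 4.2 is available, so I do not expect a genuine obstacle. The only points requiring care are the identification $k=[x+\tfrac12]$ from the window inequalities and the verification that the remainder is uniform in $u$ (which it is, since $u$ lies in a bounded interval and every $u$-dependent coefficient above is bounded on that interval). In effect Lemma 4.2 does the work, and this step is the bookkeeping needed to recognize the answer as the stated periodic function.
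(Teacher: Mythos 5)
Your proposal is correct and follows essentially the same route as the paper: both verify the elementary properties of $g$ directly and then substitute $\sqrt{t+\frac14}=k+r$ into the exact formula of the preceding lemma, expand $k^2-(k^2-t)^2$, and read off $\frac16-2r^2$ as the leading term with an $O(t^{-1/2})$ remainder (your constant term $u^2-(u^2+\frac14)^2$ agrees with the paper's after absorbing the $-\frac14(1-4r^2)$ from $t=x^2-\frac14$). The only cosmetic difference is that the paper carries the expansion one step further to an exact identity with periodic coefficients (its equation (4.7) and Corollary 4.3), whereas you truncate to the error bound actually needed for the theorem.
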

\begin{proof} It is clear from the definition that $g$ is continuous
and periodic, and if $x=k+r$ with $-\frac 1 2 \le r <\frac 1 2$ then
$g(x)=\frac 1 6 - 2r^2.$  One easily checks that
$\displaystyle{\int_{-\frac 1 2}^{\frac 1 2}(\frac 1 6 -
2r^2)\,dr=0}$, so $g$ has a mean value zero.
\end{proof}

If we write $\sqrt{t+\frac 1 4}=k+r$ then $k^2=t+\frac 1 4
-2r\sqrt{t+\frac 1 4}+r^2$ and $(k^2-t)^2=\left(\frac 1 4 +
r^2-2r\sqrt{t+\frac 1 4}\right)^2=(\frac 1 4 +
r^2)^2-r(1+4r^2)\sqrt{t+\frac 1 4}+4r^2(t+\frac 1 4)$\vspace{1mm} so
$k^2-(k^2-t)^2=t(1-4r^2)-\sqrt{t+\frac 1 4}(1-4r^2)+\frac 1 4
+r^2-(\frac 1 4 + r^2)^2 - r^2$ $=t(1-4r^2)-\sqrt{t+\frac 1 4} r
(1-4r^2)+\frac {(4r^2+3)(1-4r^2)}{16}.$ Thus (4.4) becomes
\beq A(t)=\frac 1 6 - 2r^2 - \frac {r(1-4r^2)}{2}\frac {\sqrt{t+\frac
1 4}} {t} + \frac {(4r^2+3)(1-4r^2)}{32t}.
\eeq
\begin{cor}
There exist continuous periodic functions $g_1$ and $g_2$ such that
\beq A(t)=g\left(\sqrt{t+\frac 1 4}\right)+g_1\left(\sqrt{t+\frac 1
4}\right)\frac{\sqrt{t+\frac 1 4}} t+\frac {g_2\left(\sqrt{t+\frac 1
4}\right)} t.
\eeq
More precisely,
\beq g_1(x)=\frac {\left(x-\left[x+\frac 1
2\right]\right)\left(4\left(x-\left[x-\frac 1
2\right]\right)^2\right)} 2 \quad{\text and}
\eeq
\beq g_2(x)=\frac 1 {32} 4\left(x-\left[x+\frac 1
2\right]\right)^2+3)\left(1-4\left(x-\left[x+\frac 1
2\right]\right)^2\right).
\eeq
\end{cor}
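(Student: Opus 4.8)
The plan is to push the exact formula for $A(t)$ from Lemma 4.1 through the substitution $\sqrt{t+\frac14}=k+r$ all the way to the end; this is exactly the computation displayed between (4.7) and (4.8), so the analytic work is essentially already done. The point is to reinterpret the three coefficients in (4.8) as functions of the single variable $x=\sqrt{t+\frac14}$. Because $k=[\sqrt{t+\frac14}+\frac12]$ is the integer nearest to $\sqrt{t+\frac14}$ (rounding halves down), the quantity $r=\sqrt{t+\frac14}-k$ is precisely $r(x)=x-[x+\frac12]$, the sawtooth of period $1$ with values in $[-\frac12,\frac12)$ that already appears in Theorem 4.2. So (4.8) reads
\[
A(t)=g(x)-\frac{r(x)\bigl(1-4r(x)^2\bigr)}{2}\cdot\frac{x}{t}+\frac{\bigl(4r(x)^2+3\bigr)\bigl(1-4r(x)^2\bigr)}{32}\cdot\frac1t ,\qquad x=\sqrt{t+\tfrac14},
\]
with $g$ as in Theorem 4.2.

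Accordingly I would \emph{define} $g_1(x)=-\tfrac12\,r(x)\bigl(1-4r(x)^2\bigr)$ and $g_2(x)=\tfrac1{32}\bigl(4r(x)^2+3\bigr)\bigl(1-4r(x)^2\bigr)$, so that (4.9) holds as an exact identity for all $t\ge 0$, and the $O(t^{-1/2})$ form (4.6) is recovered by discarding the last two terms, whose coefficients are bounded since $r(x)\in[-\frac12,\frac12)$. Each $g_j$ is periodic of period $1$ and piecewise polynomial, being a polynomial in the periodic function $r(x)$; substituting $r(x)=x-[x+\frac12]$ into these formulas (and, for $g_1$, rewriting the factor $1-4r(x)^2$) yields the explicit expressions (4.10) and (4.11).

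The only step that needs real checking is that $g_1$ and $g_2$ are \emph{continuous}, since $r(x)$ itself is not: the sole candidate discontinuities are the half-integers $x\in\mathbb Z+\frac12$, where $r(x)$ jumps from its left-hand value $\frac12$ down to $-\frac12$. But $g_1$ and $g_2$ both carry the factor $1-4r(x)^2$, which vanishes at $r=\pm\frac12$; hence the one-sided limits of each at every half-integer are both $0$, so $g_1,g_2$ extend continuously (and are $C^\infty$ off the half-integers). This is the same mechanism that makes $g$ in Theorem 4.2 continuous, where the relevant value $\frac16-2r^2=-\frac13$ agrees from both sides at $r=\pm\frac12$.

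I do not expect a genuine obstacle: the whole content is careful bookkeeping of the substitution $\sqrt{t+\frac14}=k+r$, which Lemma 4.1 has already made clean. The two things to watch are (i) using the nearest-integer $k$ consistently so that $r\in[-\frac12,\frac12)$ and $r(x)=x-[x+\frac12]$, in agreement with (4.2) and Theorem 4.2; and (ii) observing that (4.9) is being asserted as an identity (its decomposition need not be unique), with continuity of $g_1,g_2$ the one nontrivial assertion, settled by the shared factor $1-4r(x)^2$.
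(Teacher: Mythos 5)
Your argument is exactly the paper's: the proof of Corollary 4.3 consists of the single observation that with $x=\sqrt{t+\frac14}$ one has $r=x-\left[x+\frac12\right]$, so the corollary's identity is just a relabeling of the expansion derived immediately before it, with $g_1$ and $g_2$ read off as the coefficients of $\sqrt{t+\frac14}/t$ and $1/t$. Your extra check that the common factor $1-4r^2$ vanishes at the half-integer jumps of $r$ (justifying ``continuous,'' which the paper asserts without comment), and your $g_1(x)=-\frac12\,r(x)\bigl(1-4r(x)^2\bigr)$ — which matches the expansion rather than the visibly garbled printed formula for $g_1$ — only make the one-line proof more complete.
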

\begin{proof}
When $x=\sqrt{t+\frac 1 4}$ we have $r=x-\left[x+\frac 1 2\right]$, so
(4.8) is exactly (4.7).
\end{proof}

Next we consider the hemisphere $H$, with either Neumann or Dirichlet
boundary conditions.  Reflecting an eigenfunction symmetrically or
skew-symmetrically about the boundary produces an eigenfunction on
$S^2$, so the eigenvalues are the same $k(k+1)$, and the multiplicity
$2k+1$ on $S^2$ splits into $k+1$ for Neumann and $k$ for Dirichlet
boundary conditions on $H$.  The analog of (4.1) is thus
\beq N_N(t)=\frac {k^2+k} 2,\quad N_D(t)=\frac{k^2-k} 2.
\eeq
For this reason we choose
\beq \left\{\begin{array}{l}
\widetilde N_N(t)=\frac 1 2 t +\frac 1 2 \sqrt{t+\frac 1 4}+\frac 1 6\\
\widetilde N_D(t)=\frac 1 2 t -\frac 1 2 \sqrt{t+\frac 1 4}+\frac 1 6.\\
\end{array}\right.
\eeq
We note that we could replace $\sqrt{t+\frac 1 4}$ by $\sqrt t$, as
that would only change the refined asymptotics by $O(t^{-1/2})$ and
this would get absorbed into the remainder term in $A(t)$.
\begin{theorem}  Let $g(t)$ be as in Theorem 4.2.  Then
\beq A(t)=\frac 1 2 g\left(\sqrt{t+\frac 1 4}\right)+O(t^{-1/2})\eeq
for either boundary condition.
\end{theorem}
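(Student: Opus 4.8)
The plan is to reduce to the sphere computation of Theorem 4.2 by passing to the sum and difference of the Neumann and Dirichlet data. On the interval $k^2-k\le t<k^2+k$ the exact identities (4.12) give $N_N(t)+N_D(t)=k^2=N(t)$ and $N_N(t)-N_D(t)=k$, where $N(t)$ is the counting function of $S^2$ from (4.1); likewise (4.13) gives $\widetilde N_N(t)+\widetilde N_D(t)=t+\tfrac13=\widetilde N(t)$ and $\widetilde N_N(t)-\widetilde N_D(t)=\sqrt{t+\tfrac14}$. Writing $D_N,D_D$ for the two discrepancies, it follows that $D_N+D_D$ is exactly the sphere discrepancy $D$ while $D_N-D_D=k-\sqrt{t+\tfrac14}$, and hence $A_N+A_D=A$ (the average error of $S^2$) and $A_N-A_D=\tfrac1t\int_0^t\big(k(s)-\sqrt{s+\tfrac14}\,\big)\,ds$, with $k(s)=\big[\sqrt{s+\tfrac14}+\tfrac12\big]$. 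By Theorem 4.2, $\tfrac12(A_N+A_D)(t)=\tfrac12 g\big(\sqrt{t+\tfrac14}\big)+O(t^{-1/2})$, so the theorem will follow (for both boundary conditions, via $A_N=\tfrac12(A_N+A_D)+\tfrac12(A_N-A_D)$ and $A_D=\tfrac12(A_N+A_D)-\tfrac12(A_N-A_D)$) once we establish $A_N(t)-A_D(t)=O(t^{-1/2})$.

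To estimate the difference I would substitute $s=u^2-\tfrac14$, so that $ds=2u\,du$, $\sqrt{s+\tfrac14}=u$, and $k(s)=[u+\tfrac12]$, giving
\[
\int_0^t\big(k(s)-\sqrt{s+\tfrac14}\,\big)\,ds=2\int_{1/2}^{\sqrt{t+1/4}}\phi(u)\,u\,du,\qquad \phi(u):=\big[u+\tfrac12\big]-u,
\]
where $\phi$ has period $1$, satisfies $|\phi|\le\tfrac12$, and has mean zero over each period $[n-\tfrac12,n+\tfrac12)$. This is precisely the mechanism of the proof of Theorem 2.2: on a complete period $\int_{n-1/2}^{n+1/2}\phi(u)\,u\,du=\int_{-1/2}^{1/2}(-nw-w^2)\,dw=-\tfrac1{12}$ (after $u=n+w$), a bounded constant, so summing over the $O(\sqrt t)$ complete periods inside $[\tfrac12,\sqrt{t+1/4}]$ contributes $O(\sqrt t)$, and the single leftover incomplete period contributes at most $\tfrac12\sqrt{t+1/4}$ in absolute value. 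Therefore $\int_0^t\big(k(s)-\sqrt{s+\tfrac14}\,\big)\,ds=O(\sqrt t)$, whence $A_N(t)-A_D(t)=O(t^{-1/2})$, and the theorem follows.

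The only delicate point is the endpoint bookkeeping in the last integral: one must check that the lower limit $u=\tfrac12$ sits at a period boundary of $\phi$ (indeed $[\tfrac12+\tfrac12]=1$), and that the contribution of the final incomplete period is genuinely $O(\sqrt t)$ and not larger — which is exactly what the bound $|\phi u|\le\tfrac12\sqrt{t+1/4}$ on an interval of length at most one provides, just as in Theorem 2.2. Everything else is the elementary identity $\int_{n-1/2}^{n+1/2}\phi(u)u\,du=-\tfrac1{12}$ together with the already-established asymptotics (4.6) for the sphere, so no genuinely new obstacle arises.
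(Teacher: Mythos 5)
Your proposal is correct and is essentially the paper's own argument: the sum $A_N+A_D$ reduces to the sphere average handled by Theorem 4.2, and the difference reduces to averaging $k(s)-\sqrt{s+\tfrac14}$, which the paper likewise bounds by $O(t^{-1/2})$ by splitting into the intervals $j^2-j\le s<j^2+j$ (your substitution $s=u^2-\tfrac14$ is the same periodization in disguise), noting each complete period contributes a bounded constant and there are $O(\sqrt t)$ of them plus one incomplete period of controlled size. The only cosmetic differences are the sum/difference packaging and the exact value of the per-period constant ($-\tfrac1{12}$ versus the paper's $\tfrac16$ under its normalization), neither of which affects the estimate.
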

\begin{proof} We may write $A(t)=\frac 1 2 A_1(t)\pm \frac 1 2 A_2(t)$
where $A_1$ is the average function for the sphere, and
\[A_2(t)=\frac 1 t \int^t_0\left(j-\sqrt{s+\frac 1 4}\right)\,ds\]
where $\sqrt{s+\frac 1 4}=j+r$ with $-\frac 1 2 \le r <\frac 1 2$.  We
break the integral up into the intervals $j^2-j\le s <j^2+j$ for
$j<k$, and the final interval $k^2-k\le s \le t$.\vspace{1mm}  Now
$\displaystyle \int^{j^2+j}_{j^2-j} \left(j-\sqrt{s+\frac 1
4}\right)\,ds=\int^{\frac 1 2}_{-\frac 1 2} 2r(j+r)\,dr=\frac 1 6$, so
the contributions from the intervals $j<k$ add up to $\frac 1
{6t}(k-1)=O(t^{-1/2})$.  For the final interval we note that the
integrand is bounded and the interval has length at most $2k$, so its
contribution is also $O(\frac k t)=O(t^{-1/2})$.  Thus
$A_2(t)=O(t^{-1/2})$.\end{proof}

We note that the $\sqrt{t+\frac 1 4}$ coefficient in (4.12) is exactly
$\frac 2 {4\pi}$ where $L=2\pi$ is the length of the boundary circle
of $H$.  The circle does not contribute to the constant terms because
it is a geodesic in the sphere.\\

Finally we consider the projective sphere $PS^2$ obtained by
identifying antipodal points in $S^2$.  Then the eigenfunctions are
exactly the spherical harmonics of even degree.  Thus for $k^2-k\le t<
k^2-k$ we have
\beq N(t)=\sum_{j=0}^{\left[\frac {k-1} 2\right]}(4j+1)=
\left\{\begin{array}{ll}\frac {k^2+k} 2 & \text{ if $k$ is even.}\\
\frac {k^2-k} 2 & \text{ if $k$ is odd.}\end{array}\right.
\eeq
Thus we define the refined asymptotics
\beq \widetilde N(t)=\frac 1 2 t +\frac 1 6,
\eeq
and the analog of Theorem 4.4 holds, since the average value of the
function $(-1)^kk$ is $O(t^{-1/2})$.\\

\section{Lunes}

Fix an even number $2m$, and consider the lune $L_{2m}$ cut from the
sphere by two great circles from pole to pole separated by the angle
$\frac {2\pi} {2m}$.  Let $(x,y,w)$ denote the coordinates in $\mathbb
R^3$.  We may also think of spherical harmonics as polynomials in
$(z,\bar z, w)$.  Eigenfunctions on the lune with Neumann boundary
conditions extend by even reflection to the whole sphere, and so are
given by spherical harmonics that are invariant under the rotation
$z\mapsto e^{\frac{2\pi i}{m}}z$ and the reflection $z\mapsto \bar z$.
 Similarly, with Dirichlet boundary conditions and odd reflection the
spherical harmonics muct be invariant under the rotation and
skew-symmetric under the reflection.\\
\newcommand{\PN}{\mathbb P_N}
\newcommand{\QN}{\mathbb Q_N}

Fix a nonnegative integer $N$, and let $\PN$ denote the space of
polynomial (not necessarily harmonic) that are homogeneous of degree
$N$ and satisfy the conditions
\beq f(e^{\frac{2\pi i} m} z, e^{-\frac {2\pi i} m}\bar z, w)=f(z,\bar
z, w)\qquad \text{and}\eeq
\beq f(\bar z,z,w)=f(z,\bar z, w).\eeq
It is clear that the functions $|z|^{2j}(z^m+\bar z^m)^kw^\ell$, for
$2j+mk+\ell=N,$ belong to $\PN$, and in fact give a basis for $\PN$.
Similarly  if $\QN$ denotes the space of polynomials homogeneous of
degree $N$ satisfying (5.1) and
\beq f(\bar z,z,w)=-f(z,\bar z,w)\eeq
instead of (5.2) then $\QN$ consists of functions of the form
$(z^m-\bar z^m)f_1(z,\bar z, w)$ for $f_1\in \mathbb P_{N-m}$ (in
particular $\QN$ is $\{0\}$ unless $N\ge m$).  We also observe that
$\Delta:\PN\rightarrow\mathbb P_{N-2}$ and $\Delta:\QN\rightarrow
\mathbb Q_{N-2}$ and both mappings are onto.  It follows that the
multiplicities of the eigenvalue $N(N+1)$ in the Neumann and Dirichlet
spectra of $L_{2m}$ are given by $\dim \PN-\dim \mathbb P_{N-2}$ and
$\dim \QN-\dim \mathbb Q_{N-2}$ respectively.  It turns out to be
easier to compute these differences of dimensions rather than $\dim \PN$ and
$\dim \QN$.

\begin{lemma} For any $N\ge 0$ we have
\beq \dim \PN-\dim \mathbb P_{N-2}=\left[\frac N m \right]+1,\eeq
while for any $N\ge m$ we have
\beq \dim \QN-\dim \mathbb Q_{N-2}=\left[\frac N m \right].\eeq\end{lemma}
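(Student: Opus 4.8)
The plan is to count dimensions directly from the monomial bases already identified and read off the differences; no clever idea is needed, only careful bookkeeping.

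First, the stated basis $\{\,|z|^{2j}(z^m+\bar z^m)^k w^\ell : 2j+mk+\ell = N\,\}$ for $\PN$ shows that $\dim\PN$ equals the number of triples $(j,k,\ell)\in\mathbb Z_{\ge 0}^3$ with $2j+mk+\ell = N$. I would partition these triples according to whether $j=0$ or $j\ge 1$. The triples with $j\ge 1$ are in bijection with the triples counting $\dim\mathbb P_{N-2}$, via $(j,k,\ell)\mapsto(j-1,k,\ell)$: this map is well defined and its inverse sends a triple with $2j'+mk'+\ell' = N-2$ to $(j'+1,k',\ell')$, and it also handles the degenerate cases $N=0,1$, where $\dim\mathbb P_{N-2}=0$ and there are indeed no triples with $j\ge 1$. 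Hence
\[
\dim\PN - \dim\mathbb P_{N-2} = \#\{(k,\ell)\in\mathbb Z_{\ge 0}^2 : mk+\ell = N\}.
\]
For each $k$ with $0\le k\le \lfloor N/m\rfloor$ there is exactly one admissible $\ell = N-mk\ge 0$, and no admissible $k$ otherwise, so the right-hand side equals $\lfloor N/m\rfloor + 1$. This proves (5.4).

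For $\QN$, I would use the description of $\QN$ as the image of $\mathbb P_{N-m}$ under multiplication by $z^m-\bar z^m$. Since the polynomial ring in $(z,\bar z,w)$ is an integral domain and $z^m-\bar z^m\not\equiv 0$, this map is injective, hence a linear isomorphism from $\mathbb P_{N-m}$ onto $\QN$; in particular $\dim\QN = \dim\mathbb P_{N-m}$ for every $N$, with the convention $\mathbb P_j=\{0\}$ for $j<0$ (which is consistent with the remark that $\QN=\{0\}$ unless $N\ge m$). Applying (5.4) with $N$ replaced by $N-m$, legitimate as soon as $N-m\ge 0$, gives
\[
\dim\QN - \dim\mathbb Q_{N-2} = \dim\mathbb P_{N-m}-\dim\mathbb P_{N-m-2} = \Big\lfloor\frac{N-m}{m}\Big\rfloor + 1 = \Big\lfloor\frac N m\Big\rfloor ,
\]
which is (5.5).

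There is no real obstacle here; the only points needing attention are the small values $N=0,1$ in the first computation (where $\mathbb P_{N-2}$ is the zero space) and checking that multiplication by $z^m-\bar z^m$ is a bijection onto $\QN$ rather than merely a surjection — both immediate. One could instead package everything through the generating identity $\sum_N (\dim\PN)\,x^N = \big((1-x)(1-x^2)(1-x^m)\big)^{-1}$, noting that multiplying by $1-x^2$ yields $\big((1-x)(1-x^m)\big)^{-1}=\sum_N(\lfloor N/m\rfloor+1)x^N$, but the bijective count above is more transparent and keeps the degenerate cases visible.
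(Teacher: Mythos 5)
Your proof is correct and follows essentially the same route as the paper: the paper identifies $\dim\PN-\dim\mathbb P_{N-2}$ with the dimension of a complement of $|z|^2\,\mathbb P_{N-2}$ spanned by the monomials $(z^m+\bar z^m)^k w^\ell$ with $mk+\ell=N$ (your $j=0$ versus $j\ge1$ partition of the basis), and then computes $\dim\QN-\dim\mathbb Q_{N-2}=\dim\mathbb P_{N-m}-\dim\mathbb P_{N-m-2}=\left[\frac{N-m}{m}\right]+1=\left[\frac Nm\right]$ exactly as you do. Your explicit remarks on injectivity of multiplication by $z^m-\bar z^m$ and on the degenerate cases $N=0,1$ are welcome points of care that the paper leaves implicit.
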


\begin{proof} The mapping $f\mapsto |z|^2f$ maps $\mathbb P_{N-2}$
into $\PN$ and is one-to-one, so $\dim \PN-\dim \mathbb P_{N-2}$ is
equal to the dimension of the complementary space, which is spanned by
$(z^m+\bar z^m)^kw^\ell$ for $mk+\ell=N$.  For each choice of $k$ with
$mk\le N$ there is a unique choice of $\ell$, namely $\ell=N-mk.$  But
the number of choices of $k$ with $mk\le N$ is exactly $\left[\frac N
m\right]+1$, proving (5.4).  Then\vspace{1mm} $\dim\QN-\dim \mathbb
Q_{N-2}=\dim\mathbb P_{N-m}-\dim\mathbb P_{N-m-1}=\left[\frac {N-m}
m\right]+1=\left[\frac N m\right]$.
\end{proof}
Let $N_N(t)$ and $N_D(t)$ denote the eigenvalue counting functions for
the Neumann and Dirichlet eigenfunctions on $L_{2m}$.  Then for
$k^2-k\le t < k^2+k$ we have by Lemma 5.1 that
\beq N_N(t)=\sum_{j=0}^{k-1}\left(\left[\frac j m
\right]+1\right)\qquad \text{and}
\eeq
\beq N_D(t)=\sum_{j=m}^{k-1}\left[\frac j m \right]
\eeq
\begin{lemma} Suppose $k\equiv p \mod m$ and $k^2-k\le t < k^2 + k$.  Then
\beq N_N(t)=\frac {k^2} {2m}+\frac k 2+\frac {p(m-p)}{2m}\qquad \text{and}
\eeq
\beq N_D(t)=\frac {k^2} {2m}-\frac k 2+\frac {p(m-p)}{2m}.
\eeq
\end{lemma}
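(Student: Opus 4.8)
The goal is to evaluate the two sums in (5.6)--(5.7) in closed form. Write $k = qm + p$ with $0 \le p < m$, so that the index $j$ ranges over $0, 1, \dots, k-1 = qm + p - 1$. The plan is to group the values of $j$ into blocks of length $m$ on which $\lfloor j/m \rfloor$ is constant: for $j$ in the block $\{im, im+1, \dots, im+m-1\}$ we have $\lfloor j/m\rfloor = i$. The last, possibly incomplete, block consists of $j \in \{qm, qm+1, \dots, qm+p-1\}$, which has $p$ elements, on each of which $\lfloor j/m\rfloor = q$.

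\textbf{Computing $N_N(t)$.} First I would handle (5.6). Split off the ``$+1$'' to get $N_N(t) = k + \sum_{j=0}^{k-1}\lfloor j/m\rfloor$. For the remaining sum, the complete blocks $i = 0, 1, \dots, q-1$ each contribute $m \cdot i$, giving $m\sum_{i=0}^{q-1} i = m\binom{q}{2} = \tfrac{m q(q-1)}{2}$, and the incomplete final block contributes $pq$. Hence
\beq
N_N(t) = k + \frac{mq(q-1)}{2} + pq.
\eeq
Now substitute $q = (k-p)/m$ and simplify: $\tfrac{mq(q-1)}{2} + pq = \tfrac{q}{2}\bigl(mq - m + 2p\bigr) = \tfrac{(k-p)}{2m}\bigl((k-p) - m + 2p\bigr) = \tfrac{(k-p)(k+p-m)}{2m}$. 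Expanding the numerator gives $k^2 - mk - p^2 + mp = (k^2 - mk) - p(p-m) = k^2 - mk + p(m-p)$, so $N_N(t) = k + \tfrac{k^2 - mk + p(m-p)}{2m} = \tfrac{k^2}{2m} - \tfrac{k}{2} + k + \tfrac{p(m-p)}{2m} = \tfrac{k^2}{2m} + \tfrac{k}{2} + \tfrac{p(m-p)}{2m}$, which is (5.8).

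\textbf{Computing $N_D(t)$ and wrapping up.} For (5.7), note that $\sum_{j=m}^{k-1}\lfloor j/m\rfloor = \sum_{j=0}^{k-1}\lfloor j/m\rfloor$ since the omitted terms $j = 0, \dots, m-1$ all have $\lfloor j/m\rfloor = 0$; thus $N_D(t) = \tfrac{mq(q-1)}{2} + pq = \tfrac{k^2 - mk + p(m-p)}{2m} = \tfrac{k^2}{2m} - \tfrac{k}{2} + \tfrac{p(m-p)}{2m}$, which is (5.9). (The constraint $N \ge m$ in Lemma 5.1 is automatically respected since the $j < m$ terms contribute nothing anyway.) There is no real obstacle here — the only thing to watch is bookkeeping the partial block correctly and checking the degenerate case $p = 0$ (where the final block is empty and the formulas still hold, giving $N_N = \tfrac{k^2}{2m} + \tfrac{k}{2}$, $N_D = \tfrac{k^2}{2m} - \tfrac{k}{2}$). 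One could alternatively verify (5.8)--(5.9) by induction on $k$: incrementing $k$ by $1$ adds $\lfloor (k-1)/m\rfloor + 1$ to $N_N$ and $\lfloor (k-1)/m\rfloor$ to $N_D$, and one checks this matches the change in the right-hand sides as $p$ cycles $p \mapsto p+1$ (or wraps $p = m-1 \mapsto 0$); but the direct block-summation above is cleaner.
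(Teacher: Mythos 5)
Your proof is correct and follows essentially the same route as the paper: both evaluate the sums (5.6)--(5.7) by grouping $j$ into blocks of length $m$ on which $\lfloor j/m\rfloor$ is constant, substitute $q=(k-p)/m$, and obtain $N_D$ from $N_D(t)=N_N(t)-k$. The only cosmetic difference is that you split off the ``$+1$'' before summing, while the paper counts how many times each value $\lfloor j/m\rfloor+1$ occurs; the algebra is identical.
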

\begin{proof} The sum in (5.6) repeats each integer from 1 up to
$\left[\frac k m \right]$ exactly $m$ times, and then $\left[\frac k m
\right]+1$ exactly $p$ times.  Note that $k=p+m\left[\frac k m
\right]$, so \vspace{1mm} $N_N(t)=\frac m 2 \left[\frac k m
\right]\left(\left[\frac k m \right]+1\right)+p\left(\left[\frac k m
\right]+1\right)$$=\left(\left[\frac k m \right]+1\right)\left(\frac m
2 \left[\frac k m \right]+p\right)$\vspace{1mm}$=\left(\frac {k-p+m} m
\right)\left(\frac {k+p} 2\right)$ which  yields (5.8).  Then (5.9)
follows since $N_D(t)=N_N(t)-k$ by (5.6) and (5.7).
\end{proof}

Now we note that the average value of the constant term $\frac
{p(m-p)}{2m}$ as $p$ varies from $0$ to $m-1$ is
\begin{eqnarray*} \frac 1 {2m^2}\sum^{m-1}_{p=0} (mp-p^2) &=& \frac
{m^2(m-1)} {4m^2} - \frac {m(m-1)(2m-1)}{12m^2} \\ &=& \frac 1 {12}
\left(m-\frac 1 m\right).
\end{eqnarray*}
This leads to the choice of refined asymptotics
\beq
\widetilde N(t)=\frac t {2m}\pm\frac {\sqrt{t+\frac 1 4}} 2 + \frac 1
{12} \left(m-\frac 1 m\right)+\frac 1 {6m}
\eeq
($+$ for $\widetilde N_N$ and $-$ for $\widetilde N_D$).  Note that we
could also replace $\sqrt{t+\frac 1 4}$ by $\sqrt t$ since the
difference is $O(t^{-1/2})$.
\begin{theorem} There exists a continuous, periodic function $g$ of
mean value zero such that
\beq A(t)=g(\sqrt{t+\frac 1 4})+O(t^{-1/2}).
\eeq
\end{theorem}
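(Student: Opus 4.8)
The plan is to deduce the result from the sphere (Theorem 4.2), exactly as Theorem 4.4 was deduced from it, using Lemma 5.2. Throughout write $k=\big[\sqrt{t+\tfrac{1}{4}}+\tfrac{1}{2}\big]$, so that $k^2-k\le t<k^2+k$ and $N_{S^2}(t)=k^2$, and let $p\equiv k\pmod m$. Since $\widetilde N_{S^2}(t)=t+\tfrac{1}{3}$ we have $\tfrac{1}{2m}\widetilde N_{S^2}(t)=\tfrac{t}{2m}+\tfrac{1}{6m}$, so, recalling that the additive constant in (5.10) is $\tfrac{1}{12}\big(m-\tfrac{1}{m}\big)+\tfrac{1}{6m}$,
\[
\widetilde N_N(t)=\tfrac{1}{2m}\widetilde N_{S^2}(t)+\tfrac{1}{2}\sqrt{t+\tfrac{1}{4}}+\tfrac{1}{12}\Big(m-\tfrac{1}{m}\Big).
\]
On the other hand Lemma 5.2 reads $N_N(t)=\tfrac{1}{2m}N_{S^2}(t)+\tfrac{k}{2}+\tfrac{p(m-p)}{2m}$. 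Subtracting these and averaging,
\[
A_N(t)=\tfrac{1}{2m}A_{S^2}(t)+\tfrac{1}{2}B_1(t)+B_2(t),
\]
where $B_1(t)=\tfrac{1}{t}\int_0^t\big(k(s)-\sqrt{s+\tfrac{1}{4}}\big)\,ds$ and $B_2(t)=\tfrac{1}{t}\int_0^t\big(\tfrac{p(s)(m-p(s))}{2m}-\tfrac{1}{12}(m-\tfrac{1}{m})\big)\,ds$, with $k(s),p(s)$ as above but with $t$ replaced by $s$. So it suffices to show $B_1(t)=O(t^{-1/2})$ and $B_2(t)=O(t^{-1/2})$.

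For $B_1$: this is literally the function $A_2$ appearing in the proof of Theorem 4.4, already shown there to be $O(t^{-1/2})$. (The intervals $I_j=[j^2-j,\,j^2+j)$, $j\ge 1$, tile $[0,\infty)$; on $I_j$ one has $k(s)=j$ and $\int_{I_j}\big(j-\sqrt{s+\tfrac{1}{4}}\big)\,ds$ equals a fixed constant independent of $j$, so the $j<k$ terms sum to $O(k/t)$; on the last, partial, interval the integrand is bounded and the interval has length at most $2k$.) For $B_2$: on $I_j$ the integrand is the constant $h(j):=\tfrac{q(m-q)}{2m}-\tfrac{1}{12}(m-\tfrac{1}{m})$ where $j\equiv q\pmod m$, and $h$ is $m$-periodic in $j$ with mean zero over a period --- this last fact is precisely the computation carried out just before (5.10), where the mean of $\tfrac{p(m-p)}{2m}$ is found to be $\tfrac{1}{12}(m-\tfrac{1}{m})$. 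Hence the partial sums $\sum_{j\le J}h(j)$ are bounded, so by Abel summation $\sum_{j=1}^{k-1}(2j)\,h(j)=O(k)$; together with the $O(k)$ contribution of the last interval this gives $\int_0^t(\cdots)\,ds=O(k)$, and since $t=k^2+O(k)$ we conclude $B_2(t)=O(k/t)=O(t^{-1/2})$.

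Combining this with Theorem 4.2, which gives $A_{S^2}(t)=g_{S^2}\big(\sqrt{t+\tfrac{1}{4}}\big)+O(t^{-1/2})$ with $g_{S^2}(x)=\tfrac{1}{6}-2(x-[x+\tfrac{1}{2}])^2$, yields $A_N(t)=\tfrac{1}{2m}g_{S^2}\big(\sqrt{t+\tfrac{1}{4}}\big)+O(t^{-1/2})$, so one may take $g=\tfrac{1}{2m}g_{S^2}$, which is continuous, periodic of period $1$, and of mean value zero. For the Dirichlet case, Lemma 5.2 and (5.10) give $N_D(t)=N_N(t)-k$ and $\widetilde N_D(t)=\widetilde N_N(t)-\sqrt{t+\tfrac{1}{4}}$, whence $A_D(t)=A_N(t)-B_1(t)=A_N(t)+O(t^{-1/2})$, the same conclusion with the same $g$. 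The one delicate point is the value of the additive constant in (5.10): the decomposition above works only because that constant is $\tfrac{1}{12}(m-\tfrac{1}{m})+\tfrac{1}{6m}$, the first summand being the exact mean of the corner correction $\tfrac{p(m-p)}{2m}$ (needed so that $B_2\to 0$ in spite of the unequal lengths of the $I_j$) and the second exactly cancelling the curvature term $\tfrac{1}{6m}$ carried by $\tfrac{1}{2m}\widetilde N_{S^2}$; pinning this constant down is the crux, and everything else is routine.
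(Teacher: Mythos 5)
Your proof is correct and follows essentially the same route as the paper: the paper's proof of Theorem 5.3 decomposes $D(t)$ into the same three pieces ($\tfrac{1}{2m}$ times the sphere discrepancy, the term $\pm\tfrac12(k-\sqrt{t+\tfrac14})$, and the mean-zero corner correction $\tfrac{p(m-p)}{2m}-\tfrac{1}{12}(m-\tfrac1m)$), handles the first by Theorem 4.2, the second as in Theorem 4.4, and the third by splitting the sum over periods of $p$ using exactly the mean-zero computation preceding (5.10). Your Abel-summation bookkeeping for $B_2$ is a cosmetically different but equivalent way of exploiting that same cancellation.
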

\begin{proof} We may write $D(t)=N(t)-\widetilde N(t)$ as
$D_1(t)+D_2(t)+D_3(t)$ where
\beq \left\{\begin{array}{lcl}
D_1(t) &=& \frac {k^2} {2m}-\left(\frac t {2m}+\frac 1
{6m}\right),\vspace{1mm}\\
D_2(t) &=& \pm\left(\frac {k} {2}-\frac {\sqrt {t+\frac 1 4}} 2
\right),\qquad\text{and}\\
D_3(t) &=& \frac {p(m-p)}{2m}-\frac 1 {12}\left(m-\frac 1
m\right).\end{array}\right.\eeq
If we form the corresponding average functions $A_1,A_2,A_3$ it
suffices to show (5.11) for each separately.  Now the result for $A_1$
is Theorem 4.2 multiplied by $\frac 1 {2m}$.  For $A_2$ it is easy to
see that in fact $A_2(t)=O(t^{-1/2})$.  It remains to consider
$A_3$.\\

Assume $k^2-k\le t< k^2+k$ for $k=jm+p$ with $1\le p\le m$.  We may write
\[ A_3(t)=\frac 1
t\sum_{n=0}^{j-1}\sum_{q=1}^m\int_{(nm+q)^2-(nm+q)}^{(nm+q)^2+(nm+q)}
D_3(t)\,dt\,+\frac 1 t\int ^t_{(jm+1)^2-(jm+1)} D_3(t)\,dt.\]
It is easy to see that the last term is $O(t^{-1/2})$ because the
integrand is bounded and the length of the interval is $O(t^{1/2})$.
In each integral in the sum $D_3(t)$ is constant, so
\begin{eqnarray*} A_3(t) &=&\frac 1 t \sum_{n=0}^{j-1}\sum^m_{q=1}
2(nm+q)\left(\frac {q(m-q)} {2m} - \frac 1 {12} \left(m-\frac 1
m\right)\right)+O(t^{-1/2})\\
&=&\frac 1 t \sum_{n=0}^{j-1}\sum^m_{q=1} 2q\left(\frac{q(m-q)}
{2m}-\frac 1 {12}\left(m-\frac 1 m\right)\right)+O(t^{-1/2})\\
\end{eqnarray*}
since $\displaystyle \sum_{q=1}^m 2nm\left(\frac {q(m-q)} {2m} -\frac
1 {12} \left(m-\frac 1 m\right)\right)=0$. But the summands are
uniformly bounded and the number of terms is $O(t^{1/2})$, so
$A_3(t)=O(t^{-1/2}).$
\end{proof}

We observe (5.10) has the predicted form.  Conpared with (4.3) for the\vspace{1mm}
sphere, the leading term is reduced by the factor $\frac 1 {2m}$\vspace{1mm}
because the area of the lune is $\frac 1 {2m}$ times the area \vspace{1mm}of the
circle, and the $\pm\frac 1 2$ factor in the square root term is
$\frac {2\pi} {4\pi}$, where $2\pi$ is the length of the perimeter of
the lune.  Finally, the constant is twice $\frac 1 {24}\left(m-\frac 1
m\right)$, the value predicted for each of the two angels $\frac \pi
m$ of the lune.  We could also obtain the analog of corollary 4.3 for
the remainder term in (5.11).\\

Next we consider the half-lune $L_{2m}^{1/2}$ obtained by slicing
along the equator.  Functions satisfying Neumann or Dirichlet
conditions on the equatorial edge extend to the entire lune by even or
odd reflection; so we may study four different types of boundary
conditions.  We denote by $N^+_N$, $N_N^-$, $N_D^+$ and $N^-_D$ the
corresponding eigenvalue counting functions, where the $\pm$ denote
the equatorial boundary conditions.  Then $N_N(t)=N_N^+(t)+N_N^-(t)$
and $N_D(t)=N^+_D(t)+N_D^-(t)$, and we need to understand how the
spherical harmonics that contribute to $N_N(t)$ and $N_D(t)$ split
into even and odd functions in the $w$ variable.  It
is clear that this is determined by the parity of $\ell$ in the basis element
$|z|^{2j}(z^m+\bar z^m)^kw^\ell$.\\

\begin{lemma} (a) Assume $m$ is even.  Then

\beq \dim \mathbb P^+_N-\dim \mathbb P^+_{N-2}=\left\{\begin{array}{cl} \left[\frac N m\right]+1 & \text{if $N$ is even}\\
0 & \text{if $N$ is odd}\end{array}\right.
\eeq 
\beq\dim \mathbb P^-_N-\dim \mathbb P^-_{N-2}=\left\{\begin{array}{cl} 0  & \text{if $N$ is even}\\
\left[\frac N m\right]+1 & \text{if $N$ is odd}\end{array}\right.
\eeq 
\beq\dim \mathbb Q^+_N-\dim \mathbb Q^+_{N-2}=\left\{\begin{array}{cl} \left[\frac N m\right] & \text{if $N$ is even}\\
0 & \text{if $N$ is odd}\end{array}\right.
\eeq \beq\dim \mathbb Q^-_N-\dim \mathbb Q^-_{N-2}=\left\{\begin{array}{cl} 0 & \text{if $N$ is even}\\
\left[\frac N m\right] & \text{if $N$ is odd}\end{array}\right.
\eeq

(b) Assume $m$ is odd.  Then
\beq \dim \mathbb P^+_N-\dim \mathbb P^+_{N-2}=\left\{\begin{array}{cc} 
\frac 1 2\left[\frac N m\right]+\frac 1 2 & \text{if $\left[\frac N m\right]$ is odd}\vspace{1mm}\\
\frac 1 2\left[\frac N m\right]+1 & \text{if $\left[\frac N m\right]$ is even and $N$ is even}\vspace{1mm}\\
\frac 1 2\left[\frac N m\right] & \text{if $\left[\frac N m\right]$ is even and $N$ is odd}\end{array}\right.
\eeq 
\beq \dim \mathbb P^-_N-\dim \mathbb P^-_{N-2}=\left\{\begin{array}{cc} 
\frac 1 2\left[\frac N m\right]+\frac 1 2 & \text{if $\left[\frac N m\right]$ is odd}\vspace{1mm}\\
\frac 1 2\left[\frac N m\right] & \text{if $\left[\frac N m\right]$ is even and $N$ is even}\vspace{1mm}\\
\frac 1 2\left[\frac N m\right] +1 & \text{if $\left[\frac N m\right]$ is even and $N$ is odd}\end{array}\right.
\eeq 
\beq \dim \mathbb Q^+_N-\dim \mathbb Q^+_{N-2}=\left\{\begin{array}{cc} 
\frac 1 2\left[\frac N m\right]& \text{if $\left[\frac N m\right]$ is even}\vspace{1mm}\\
\frac 1 2\left[\frac N m\right]+\frac 1 2 & \text{if $\left[\frac N m\right]$ is odd and $N$ is odd}\vspace{1mm}\\
\frac 1 2\left[\frac N m\right]  - \frac 1 2& \text{if $\left[\frac N m\right]$ is odd and $N$ is even}\end{array}\right.
\eeq 
\beq \dim \mathbb Q^-_N-\dim \mathbb Q^-_{N-2}=\left\{\begin{array}{cc} 
\frac 1 2\left[\frac N m\right]& \text{if $\left[\frac N m\right]$ is even}\vspace{1mm}\\
\frac 1 2\left[\frac N m\right]-\frac 1 2 & \text{if $\left[\frac N m\right]$ is odd and $N$ is odd}\vspace{1mm}\\
\frac 1 2\left[\frac N m\right] + \frac 1 2& \text{if $\left[\frac N m\right]$ is odd and $N$ is even}\end{array}\right.
\eeq 
\end{lemma}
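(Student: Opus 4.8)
The plan is to reduce the entire lemma to an elementary parity count over the explicit monomial basis already in hand. The key observation is that multiplication by $|z|^2$ is injective from $\mathbb P_{N-2}$ into $\mathbb P_N$ \emph{and commutes with the substitution $w\mapsto -w$} (since $|z|^2$ does not involve $w$), so it carries $\mathbb P^+_{N-2}$ into $\mathbb P^+_N$ and $\mathbb P^-_{N-2}$ into $\mathbb P^-_N$; hence $\dim\mathbb P^+_N-\dim\mathbb P^+_{N-2}$ is the dimension of the even-in-$w$ part of a complement of $|z|^2\mathbb P_{N-2}$ in $\mathbb P_N$. As in the proof of Lemma 5.1, such a complement is spanned by the monomials $(z^m+\bar z^m)^k w^{\,N-mk}$ with $0\le k\le[N/m]$, and the $w$-parity of the monomial indexed by $k$ is the parity of $N-mk$. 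Likewise, from $\mathbb Q_N=(z^m-\bar z^m)\mathbb P_{N-m}$ one gets that a complement of $|z|^2\mathbb Q_{N-2}$ in $\mathbb Q_N$ is spanned by $(z^m-\bar z^m)(z^m+\bar z^m)^{k-1}w^{\,N-mk}$ with $1\le k\le[N/m]$, again with $w$-parity equal to the parity of $N-mk$. Thus all eight differences in the lemma equal the number of integers $k$ in a specified interval of consecutive integers for which $N-mk$ has a prescribed parity.

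I would then split on the parity of $m$. If $m$ is even then $N-mk\equiv N\pmod{2}$ independently of $k$, so each count is either the whole interval or empty; since the intervals have $[N/m]+1$ elements for the $\mathbb P$-spaces and $[N/m]$ elements for the $\mathbb Q$-spaces, this gives (5.14)--(5.17) at once. If $m$ is odd then $N-mk\equiv N-k\pmod{2}$, so for the $+$-spaces I count $k\equiv N\pmod{2}$ and for the $-$-spaces $k\equiv N+1\pmod{2}$. Writing $L=[N/m]$, the count over $\{0,1,\dots,L\}$ is $\lceil(L+1)/2\rceil$ for the parity class containing $0$ and $\lfloor(L+1)/2\rfloor$ for the other, while the count over $\{1,2,\dots,L\}$ is $\lceil L/2\rceil$ for the parity class containing $1$ and $\lfloor L/2\rfloor$ for the other. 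Sorting these by the parity of $L$ (noting that, when $L$ is odd, the two classes are equal over $\{0,\dots,L\}$ but unequal over $\{1,\dots,L\}$, and vice versa when $L$ is even) and then by the parity of $N$ produces precisely the three-way case splits of (5.18)--(5.25).

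The substantive part is just this last bookkeeping: matching the floor/ceiling counts to the stated formulas in the right sub-cases. The one point that genuinely requires care is that for the $\mathbb Q$-spaces with $m$ odd the deletion of the index $k=0$ changes which parity class is favored, which is exactly what turns the $+\tfrac12$ corrections in (5.18)--(5.19) into the $-\tfrac12$ corrections in (5.22)--(5.23); I would guard against sign slips by laying out a small table recording, for each of the four spaces, the index range, the reduction of ``parity of $N-mk$'' to ``parity of $k$,'' and the resulting count as a function of the parities of $L=[N/m]$ and of $N$. Beyond the basis description already established, no further input is needed.
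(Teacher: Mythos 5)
Your proposal is correct and follows essentially the same route as the paper: both reduce each difference of dimensions to counting the solutions of $mk+\ell=N$ with $\ell$ of prescribed parity, over $0\le k\le\left[\frac Nm\right]$ for the $\mathbb P$-spaces and over $1\le k\le\left[\frac Nm\right]$ for the $\mathbb Q$-spaces, and then sort by the parities of $m$, $N$, and $\left[\frac Nm\right]$. The only (immaterial) difference is that you count the $\mathbb Q$-cases directly over the shifted index range, whereas the paper obtains them from the $\mathbb P$-cases by replacing $N$ with $N-m$, which flips the parities of both $N$ and $\left[\frac Nm\right]$.
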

\begin{proof}
As in the proof of Lemma 5.1, we need to count the number of solutions of $mk+\ell=N$ but now with $\ell$ restricted to be even and odd.  If $m$ is even then $\ell=N-mk$ will have the same parity as $N$, and all $\left[\frac N m\right]+1$ choices of $k$ are allowed.  This proves (5.13) and (5.14).  A similar argument proves (5.15) and (5.16) since there are only $\left[\frac N m\right]$ choices of $k$.  If $m$ is odd, then $\ell$ will be even if $N$ and $k$ have the same parity, and $\ell$ will be odd if $N$ and $k$ have opposite parity.  If $N$ is even, then the even choices of $k$ in $0\le k \le \left[\frac N m\right]$ will contribute to $\dim \PN^+-\dim\mathbb P^+_{N-2}$ while the odd choices of $k$ will contribute to $\dim \mathbb P^-_N-\dim \mathbb P^-_{N-2}$.  If $\left[\frac N m\right]$ is odd there will be exactly $\frac 1 2\left[\frac N m\right]+\frac 1 2$ of even and odd choices, while if $\left[\frac N m\right]$ is even there will be $\frac 1 2 \left[\frac N m\right]+1$ even choices and $\frac 1 2 \left[\frac N m\right]$ odd choices.  If, on the other hand, $N$ is odd, it is the odd choices of $k$ that contribute to $\dim \mathbb P^+_N-\dim \mathbb P^+_{N-2}$ and the even choices of $k$ that contribute to $\dim \mathbb P^-_N-\dim \mathbb P^-_{N-2}$.  Putting this together yields (5.17) and (5.18).  Then (5.19) and (5.20) are obtained by replacing $N$ by $N-m$, which flips the parity of both $N$ and $\left[\frac N m\right]$.
\end{proof} 
\begin{lemma} Suppose $k\equiv p\mod m$ and $k^2-k\le t< k^2+k$.\vspace{1mm}\\
(a) Assume $m$ is even.  Then
\beq N_N^+(t)=\left\{\begin{array}{cc} 
\frac {k^2} {4m} + \frac k 4 +\frac {(m-p)p}{4m} & \text{if $p$ is even}\vspace{1mm}\\
\frac {k^2} {4m} +\left(\frac 1 4 +\frac 1 {2m}\right)k+\frac {(m-p)p}{4m} +\frac {m-p} {2m} & \text{if $p$ is odd}
\end{array}\right.\eeq
\beq N_N^-(t)=\left\{\begin{array}{cc} 
\frac {k^2} {4m} + \frac k 4 +\frac {(m-p)p}{4m} & \text{if $p$ is even}\vspace{1mm}\\
\frac {k^2} {4m} +\left(\frac 1 4 -\frac 1 {2m}\right)k+\frac {(m-p)p}{4m} +\frac {m-p} {2m} & \text{if $p$ is odd}
\end{array}\right.\eeq
\beq N_D^+(t)=\left\{\begin{array}{cc} 
\frac {k^2} {4m} - \frac k 4 +\frac {(m-p)p}{4m} & \text{if $p$ is even}\vspace{1mm}\\
\frac {k^2} {4m} -\left(\frac 1 4 -\frac 1 {2m}\right)k+\frac {(m-p)p}{4m} -\frac {p} {2m} & \text{if $p$ is odd}
\end{array}\right.\eeq
\beq N_D^-(t)=\left\{\begin{array}{cc} 
\frac {k^2} {4m} - \frac k 4 +\frac {(m-p)p}{4m} & \text{if $p$ is even}\vspace{1mm}\\
\frac {k^2} {4m} -\left(\frac 1 4 +\frac 1 {2m}\right)k+\frac {(m-p)p}{4m} +\frac {p} {2m} & \text{if $p$ is odd}
\end{array}\right.\eeq

(b) Assume $m$ is odd.  Then
\beq N^+_N(t)=\frac {k^2}{4m}+\left(\frac 1 4 + \frac 1 {4m}\right)k+\frac {p(m-p)}{4m}+\frac {m-p}{4m}+h(n,p)
\eeq
\beq N^-_N(t)=\frac {k^2}{4m}+\left(\frac 1 4 - \frac 1 {4m}\right)k+\frac {p(m-p)}{4m}-\frac {m-p}{4m}-h(n,p)
\eeq
\beq N^+_D(t)=\frac {k^2}{4m}+\left(-\frac 1 4 + \frac 1 {4m}\right)k+\frac {p(m-p)}{4m}+\frac {m-p}{4m}+\frac 1 2+h(n,p)
\eeq
\beq N^-_D(t)=\frac {k^2}{4m}+\left(-\frac 1 4 - \frac 1 {4m}\right)k+\frac {p(m-p)}{4m}-\frac {m-p}{4m}+\frac 1 2 -h(n,p)
\eeq
where
\beq h(n,p)=\left\{\begin{array}{cl}
0 & \text{if $n$ is odd}\\
\frac 1 4 & \text{if $n$ is even and $p$ is odd}\\
-\frac 1 4 & \text{if $n$ is even and $p$ is even.} 
\end{array}\right.\eeq
\end{lemma}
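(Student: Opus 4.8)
The plan is to imitate the proofs of Lemmas 5.1 and 5.2, now keeping track of the extra $\mathbb Z/2$ given by the parity in the $w$-variable, which governs the equatorial boundary condition. Write $\mathbb P^+_N$ and $\mathbb P^-_N$ for the subspaces of $\PN$ on which $w\mapsto -w$ acts by $+1$ and $-1$ respectively, and likewise $\mathbb Q^\pm_N$ inside $\QN$; these are exactly the spaces appearing in Lemma 5.4. Since $\Delta$ commutes with the rotation $z\mapsto e^{2\pi i/m}z$, with the reflection $z\mapsto\bar z$, and with $w\mapsto -w$, it carries $\mathbb P^\pm_N$ into $\mathbb P^\pm_{N-2}$ and $\mathbb Q^\pm_N$ into $\mathbb Q^\pm_{N-2}$; moreover each of these restrictions is onto, because given $g$ in the target one first solves $\Delta f=g$ in $\PN$ (resp.\ $\QN$)---possible by the surjectivity already established in the text---and then replaces $f$ by its component of the same $w$-parity as $g$. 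Hence the multiplicity of the eigenvalue $N(N+1)$ among the eigenfunctions counted by $N_N^+,N_N^-,N_D^+,N_D^-$ is $\dim\mathbb P^+_N-\dim\mathbb P^+_{N-2}$, $\dim\mathbb P^-_N-\dim\mathbb P^-_{N-2}$, $\dim\mathbb Q^+_N-\dim\mathbb Q^+_{N-2}$, $\dim\mathbb Q^-_N-\dim\mathbb Q^-_{N-2}$ respectively, so that for $k^2-k\le t<k^2+k$ each of the four counting functions equals the partial sum $\sum_{N=0}^{k-1}$ of the corresponding difference, just as in (5.6)--(5.7). Everything then reduces to evaluating those four sums using Lemma 5.4.

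I would first reduce the two Dirichlet cases to the two Neumann cases. Since $\mathbb Q_N=(z^m-\bar z^m)\,\mathbb P_{N-m}$ and multiplication by $z^m-\bar z^m$ preserves $w$-parity, we get $\mathbb Q^\pm_N\cong\mathbb P^\pm_{N-m}$, so the $N_D^\pm$ summand at index $N$ equals the $N_N^\pm$ summand at index $N-m$ for $N\ge m$ and vanishes for $N<m$; summing, $N_D^\pm$ evaluated with parameter $k$ equals $N_N^\pm$ evaluated with parameter $k-m$, and writing $k=nm+p$ so that $k-m=(n-1)m+p$ this turns the Neumann formulas into (5.27)--(5.29), with the accompanying shift $n\mapsto n-1$ in the correction term. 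For the Neumann sums themselves with $m$ even the computation is routine: by (5.13)--(5.14) only the even (resp.\ odd) $N$ contribute, each contributing $[N/m]+1$, and since $m$ is even we have $k\equiv p\pmod2$, so the parity of $N$ throughout the last partial block $nm\le N<k$ is fixed by $p$; the block arithmetic of Lemma 5.2, applied to the even (resp.\ odd) values of $N$, then yields (5.21)--(5.24).

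When $m$ is odd the difference in (5.17)--(5.18) depends on the parities of \emph{both} $N$ and $[N/m]$. Over a block $nm\le N<(n+1)m$ the value $[N/m]=n$ is fixed while the parity of $N$ alternates, and passing to the next block flips the parity of $n$ and, since $m$ is odd, shifts the starting parity of $N$; hence the relevant data are periodic in $N$ with period $2m$. I would group $\sum_{N=0}^{k-1}$ into consecutive pairs of blocks indexed by the even values of $n$, evaluate one full double-block in closed form (the $\pm\frac12$ contributions pair up so that a full double-block contributes an integer, and summing these over the double-blocks gives the quadratic-in-$k$ main term), and then account for the leftover: at most one more full block followed by the partial block $nm\le N<k$. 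Counting over this remainder, split once more according to the parity of $N$ inside the last block, produces the linear-in-$k$ term, the residue term $\frac{p(m-p)}{4m}\pm\frac{m-p}{4m}$, and exactly the $n$-dependent correction $h(n,p)$ of (5.30), whose three cases record whether the last block is the first or the second of a double-block (the parity of $n$) together with the parity of $p$. Finally I would check the outcome against $N_N^+(t)+N_N^-(t)=N_N(t)$ and $N_D^+(t)+N_D^-(t)=N_D(t)$ from Lemma 5.2, and against integrality and nonnegativity of each counting function; these cross-checks pin down all the signs.

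The delicate step is the $m$-odd bookkeeping: two parities whose joint period is $2m$ interact, so the remainder after the full double-blocks may contain anywhere from $0$ up to $2m-1$ terms and must itself be subdivided by the parity of $N$ within the last block, and it is precisely this subdivision that must be organized so as to reproduce $h(n,p)$. One must also be uniformly consistent about whether $p$ is taken in $\{0,\dots,m-1\}$ or in $\{1,\dots,m\}$, since that choice shifts how a final value $[k/m]+1$ gets counted, exactly as in the proof of Lemma 5.2.
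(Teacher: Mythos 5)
Your setup --- the four spaces $\mathbb P^{\pm}_N$, $\mathbb Q^{\pm}_N$, surjectivity of $\Delta$ on each parity component, multiplicities as dimension differences, and the counting functions as partial sums over $0\le N\le k-1$ --- is exactly the paper's, and for part (a) and the Neumann formulas of part (b) your block-by-block summation is the same computation the paper performs (the paper counts $\#\{N:[N/m]\text{ odd}\}$ and $\#\{N:[N/m]\text{ even and }N\text{ even}\}$ directly, splitting on the parity of $n$, rather than pairing blocks of length $2m$; this is the same bookkeeping). The genuinely different move is your reduction of the Dirichlet cases via $\mathbb Q^{\pm}_N\cong\mathbb P^{\pm}_{N-m}$, so that $N_D^{\pm}$ at parameter $k$ equals $N_N^{\pm}$ at $k-m$; the paper instead compares the summands (5.19)--(5.20) to (5.17)--(5.18) at the same $N$. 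Your identity is correct, and for $m$ even it does reproduce (5.23)--(5.24).

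For $m$ odd, however, the step you assert without computing does not land where you claim. Substituting $k-m$ for $k$ in (5.25) yields
\[
\frac{k^2}{4m}+\Bigl(-\frac14+\frac1{4m}\Bigr)k+\frac{p(m-p)}{4m}+\frac{m-p}{4m}-\frac14+h(n-1,p),
\]
not (5.27): the shift flips the parity of $n$, so the correction term is $h(n-1,p)$ rather than $h(n,p)$, and the constant is $-\frac14$ rather than $+\frac12$. This is not a flaw in your method but in the target: (5.27) and (5.28) as printed sum to $\frac{k^2}{2m}-\frac k2+\frac{p(m-p)}{2m}+1$, contradicting (5.9) by exactly $1$, so no correct derivation can produce them verbatim. (Concretely, for $m=1$, $k=3$ a direct count of the relevant spherical harmonics gives $N_D^+=2$; your shift gives $2$; formula (5.27) gives $3$. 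For $m=3$, $k=7$ the direct count is $3$ versus $4$ from (5.27).) The consistency check $N_D^++N_D^-=N_D$ that you propose at the end would catch this, but as written your argument claims to arrive at (5.27)--(5.29), which it cannot; you must carry the substitution through and record the corrected constant and the shifted correction $h(n-1,p)$. Separately, the $m$-odd Neumann count --- which you yourself flag as the delicate step --- is only described, not performed; since every $\pm\frac14$ and $\pm\frac12$ in the statement originates there, the proof is not complete until that double-block computation is actually written out.
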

\begin{proof} As in the proof of Lemma 5.2, we need to sum the expressions in Lemma 5.4 for $N\le k-1$.  Assume $m$ is even.  Write $k=mn+p$.  Then
\[N_N^+(t)=\sum_{j\le\frac {k-1} 2} \left(\left[\frac {2j} m\right]+1\right)\quad\text{and}\]
\[N_N^-(t)=\sum_{j\le\frac {k-2} 2} \left(\left[\frac{2j+1} m\right] + 1\right).\]
The sum for $N_N^+$ has all integers from 1 to $n$ repeated $\frac m 2$ times, and the integer $n+1$ repeated $\frac p 2$ times if $p$ is even and $\frac {p+1} 2$ times if $p$ is odd. Thus
\[N_N^+(t)=\left\{\begin{array}{cc}
\frac m 4 n(n+1) + \frac p 2 (n+1) & \text{if $p$ is even}\\
\frac m 4 n(n+1) + \frac {p+1} 2 (n+1) & \text{if $p$ is odd.}\\
\end{array}\right.\]
Substituting $n=\frac{k-p} m$ we obtain (5.21). Similar reasoning shows
\[N_N^-(t)=\left\{\begin{array}{cc}
\frac m 4 n(n+1) + \frac p 2 (n+1) & \text{if $p$ is even}\\
\frac m 4 n(n+1) + \frac {p-1} 2 (n+1) & \text{if $p$ is odd.}\\
\end{array}\right.\]
and this reduces to (5.22).\\

Similarly we have
\[N_D^+(t)=\sum_{j\le\frac {k-1} 2} \left[\frac {2j} m\right]\quad\text{and}\]
\[N_D^-(t)=\sum_{j\le\frac {k-2} 2} \left[\frac{2j+1} m\right]\]
so we just have to replace $n$ by $n-1$ in the expressions for $N_N^+(t)$ and $N_N^-(t)$.  This yields (5.23) and (5.24).\\

Next consider the case when $m$ is odd.  For $N_N^+(t)$ we need to sum over $N$ in $0\le N\le k-1$ the values $\frac 1 2 \left[\frac N m\right]+\frac 1 2$, $\frac 1 2 \left[\frac N m\right]+1$ or $\frac 1 2 \left[\frac N m\right]$ according to the parity cases in (5.17).  As before there will be $m$ choices of $N$ with $\left[\frac N m\right]=j$ for $0\le j\le n-1$, and $p$ choices of $N$ with $\left[\frac N m\right]=n$.  So \[\begin{split}N_N^+(t)=m\sum_{j=0}^{n-1}\frac 1 2 j &+\frac 1 2 pn + \frac 1 2 \#\{N:\left[\frac N m\right] \text{ is odd}\}\\
&+\#\{N:\left[\frac N m\right] \text{ is even and $N$ is even}\}\end{split}\]
If $n$ is odd then 
\[\#\{N:\left[\frac N m\right]\text{ is odd}\}=m\left(\frac{n-1} 2\right)+p\]
and \[\quad\#\{N:\left[\frac N m\right] \text{ is even and $N$ is even}\}=\left(\frac{m+1} 2\right)\left(\frac{n+1} 2\right).\]  In this case
\[\begin{split}N_N^+(t)&=\frac{m(n-1)n} 4 + \frac {pn} 2 +\frac {m(n-1)+(m+1)(n+1)} 4 +\frac p 2\\&=\frac {m(n+1)n} 4 +\frac {n+1} 4+\left(\frac {n+1} 2\right)p,\end{split}\]
and substituting $n=\frac{k-p} m$ we obtain
\[N_N^+(t)=\frac {k^2}{4m}+\left(\frac{m+1}{4m}\right)k+\frac{p(m-p)}{4m}+\frac{m-p}{4m}.\]
If $n$ is even then
\[\#\{N:\left[\frac N m\right]\text{ is odd}\}=\frac {mn} 2, \quad\text{and}\]
\[\#\{N:\left[\frac N m\right]\text{ is even and $N$ is even}\}=\left\{\begin{array}{cc}
\left(\frac{m+1} 2\right)\frac n 2 + \frac{p+1} 2 & \text{ if $p$ is odd}\\
\left(\frac{m+1} 2\right)\frac n 2 + \frac{p} 2 & \text{ if $p$ is even.}\end{array}\right.\]
So 
\[\begin{split}N_N^+(t)&=\frac {m(n-1)n} 4 +\frac {pn} 2 +\frac {mn+(m+1)n} 4 + \left\{\begin{array}{cc} \frac {p+1} 2 & \text{ $p$ odd}\\ \frac p 2 & \text{ $p$ even}\end{array}\right.\\
&=\frac {m(n+1)n} 4 +\frac n 4 +\left(\frac{n+1} 2\right)p+\frac 1 2\,(\text{if $p$ is odd}).\end{split}\]
This establishes (5.25), and (5.26) follows from $N_N(t)=N_N^+(t)+N_N^-(t)$ and (5.8).\\

To compute $N_D^\pm(t)$ we note that (5.19) and (5.20) differ from (5.17) and (5.18) by $-\frac 1 2$.  Thus $N_D^+(t)=N_N^+(t)-\left(\frac{k-1}2\right)$ and $N_D^-(t)=N_N^-(t)-\left(\frac{k-1}2\right)$, and this implies (5.27) and (5.28).
\end{proof}

Now we define the refined asymptotics
\beq \widetilde N_N^+(t)=\frac t {4m} +\left(\frac 1 4 +\frac 1 {4m}\right)\sqrt{t+\frac 1 4}+\frac 1 {24}\left(m-\frac 1 m\right)+\frac 1 8 + \frac 1 {12m}
\eeq
\beq \widetilde N_N^-(t)=\frac t {4m} +\left(\frac 1 4 -\frac 1 {4m}\right)\sqrt{t+\frac 1 4}+\frac 1 {24}\left(m-\frac 1 m\right)-\frac 1 8 + \frac 1 {12m}
\eeq
\beq \widetilde N_D^+(t)=\frac t {4m} +\left(-\frac 1 4 +\frac 1 {4m}\right)\sqrt{t+\frac 1 4}+\frac 1 {24}\left(m-\frac 1 m\right)-\frac 1 8 + \frac 1 {12m}
\eeq
\beq \widetilde N_D^-(t)=\frac t {4m} +\left(-\frac 1 4 -\frac 1 {4m}\right)\sqrt{t+\frac 1 4}+\frac 1 {24}\left(m-\frac 1 m\right)+\frac 1 8 + \frac 1 {12m}.
\eeq

Note that the coefficients of $\sqrt{t+\frac 1 4}$ may be explained because the two quarter circle boundary arcs have total length $\pi$, contributing $\frac 1 4$ to $\widetilde N_N^\pm$ and $-\frac 1 4$ to $\widetilde N_D^\pm$, and the equatorial boundary arc has length $\frac \pi m$, contributing $\frac 1 {4m}$ to $\widetilde N_N^+$ and $\widetilde N_D^+$ and $-\frac 1 {4m}$ to $\widetilde N_N^-$ and $\widetilde N_D^-$.  For the constant term we note that the top angle $\frac \pi m$ always has the same type of boundary conditions on either side, so it contributes $\frac 1 {24}\left(m-\frac 1 m\right)$, while the bottom right angles have like boundary conditions for $\widetilde N_N^+$ and $\widetilde N_D^-$, so contribute $\frac 1 8$ to each, and opposite boundary conditions for $\widetilde N_N^-$ and $\widetilde N_D^+$ and so contribute $-\frac 1 8$ to each.\\
\begin{theorem} The analog of Theorem 5.3 holds. \end{theorem}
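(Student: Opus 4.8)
The plan is to reduce to the lune $L_{2m}$, where Theorem 5.3 is available, by using the splitting of the lune spectrum into the parts that are even and odd in the $w$ variable. Since a Neumann (resp.\ Dirichlet) eigenfunction on $L_{2m}$ is the sum of its even and odd parts in $w$, and these restrict to $N_N^+$ or $N_N^-$ (resp.\ $N_D^+$ or $N_D^-$) eigenfunctions on the half-lune, we have $N_N(t)=N_N^+(t)+N_N^-(t)$ and $N_D(t)=N_D^+(t)+N_D^-(t)$. Hence for each choice of sign,
\[ D_N^\pm(t)=\tfrac12 D_N(t)\pm\tfrac12\Bigl[\bigl(N_N^+(t)-N_N^-(t)\bigr)-\bigl(\widetilde N_N^+(t)-\widetilde N_N^-(t)\bigr)\Bigr], \]
and similarly for $D_D^\pm$. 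Averaging gives $A_N^\pm(t)=\tfrac12 A_N(t)\pm\tfrac12 E_N(t)$, where $E_N(t)=\tfrac1t\int_0^t\bigl[(N_N^+-N_N^-)-(\widetilde N_N^+-\widetilde N_N^-)\bigr](s)\,ds$. Since Theorem 5.3 already gives $A_N(t)=g_N(\sqrt{t+\tfrac14})+O(t^{-1/2})$ with $g_N$ continuous, periodic, of mean zero, it suffices to show $E_N$ (and the analogous $E_D$) has the same form; the four cases $N_N^\pm,N_D^\pm$ are then all covered.

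First I would make the integrand explicit. From (5.33)--(5.36), $\widetilde N_N^+-\widetilde N_N^-=\tfrac1{2m}\sqrt{t+\tfrac14}+\tfrac14$ and $\widetilde N_D^+-\widetilde N_D^-=\tfrac1{2m}\sqrt{t+\tfrac14}-\tfrac14$, while Lemma 5.5 and its proof express $N_N^+-N_N^-$, for $k^2-k\le t<k^2+k$, in terms of $k$ and $p\equiv k\pmod m$ (and, when $m$ is odd, of the quantity $h(n,p)$ with $k=mn+p$): it is the sum of a term linear in $k$ --- whose coefficient depends on the parity of $p$ when $m$ is even --- and a bounded function of $k$ that is periodic of period $m$ (or $2m$, through $h$). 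Writing the parity-dependent linear coefficient, when $m$ is even, as $\tfrac1{2m}\bigl(1-(-1)^k\bigr)$ times $k$ separates a ``clean'' linear term from the term $-\tfrac1{2m}(-1)^k k$. I then reduce the evaluation of $E_N$ to three integrals, each treated by the device used in the proofs of Theorems 4.2, 4.4 and 5.3 of splitting $[0,t]$ at the points $j^2\pm j$, on each resulting block of which $k(s)$ is constant: (i) $\tfrac1t\int_0^t\bigl(k(s)-\sqrt{s+\tfrac14}\bigr)\,ds=O(t^{-1/2})$, each full block contributing the constant $-\tfrac16$; (ii) for any bounded $\phi$ periodic in $k$, $\tfrac1t\int_0^t\phi(k(s))\,ds=\overline\phi+O(t^{-1/2})$, because the mean-zero part of $\phi$, weighted by the block lengths $2j$, sums over each period to a bounded total, hence over $[0,t]$ to $O(t^{1/2})$; and (iii) when $m$ is even, $\tfrac1t\int_0^t(-1)^{k(s)}k(s)\,ds=2(-1)^{K}r+O(t^{-1/2})$, where $K=\bigl[\sqrt{t+\tfrac14}+\tfrac12\bigr]$ and $r=\sqrt{t+\tfrac14}-K$, which one checks is a continuous, period-$2$, mean-zero function of $\sqrt{t+\tfrac14}$.

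Assembling these, $E_N(t)$ equals the sum of the mean values from (ii) --- which works out to the constant $\tfrac14$, exactly cancelling the $-\tfrac14$ coming from $\widetilde N_N^+-\widetilde N_N^-$ --- plus, when $m$ is even, $-\tfrac1m(-1)^{K}r$ from (iii), plus $O(t^{-1/2})$. Thus
\[ A_N^\pm(t)=\tfrac12 g_N\bigl(\sqrt{t+\tfrac14}\bigr)\mp\tfrac1{2m}(-1)^{[\sqrt{t+1/4}+1/2]}\bigl(\sqrt{t+1/4}-[\sqrt{t+1/4}+1/2]\bigr)+O(t^{-1/2}) \]
for $m$ even (the middle term is absent when $m$ is odd), and the explicit function of $\sqrt{t+\tfrac14}$ on the right is continuous, periodic and of mean value zero. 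The case of $A_D^\pm$ is identical, now using $A_D$ from Theorem 5.3 and the analogous expression for $N_D^+-N_D^-$.

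The step I expect to be the main obstacle is the bookkeeping around this linear-in-$k$ term of $N^+-N^-$: for $m$ even its coefficient varies with the parity of $p$, producing the term $(-1)^{k}k$ whose averaged integral is $O(1)$ and genuinely oscillatory, so it cannot be absorbed into the error but must be carried along and recognized as a continuous mean-zero periodic function of $\sqrt{t+\tfrac14}$; for $m$ odd one must instead track the extra term $h(n,p)$, which has period $2m$ in $k$, through case (ii). One must also verify, in all four boundary-condition cases and for both parities of $m$, that the residual constants cancel exactly --- this is precisely the consistency check underlying the choice of constant terms in (5.33)--(5.36), which were already justified geometrically before the statement, so it is certain to succeed, but it requires care since the value of that constant is the entire point of the refined asymptotics.
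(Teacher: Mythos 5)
Your proposal is correct, and it takes a different (and in one respect more thorough) route than the paper. The paper's proof is very terse: it implicitly reuses the decomposition $D=D_1+D_2+D_3$ from Theorem 5.3 and only addresses the parity-dependent \emph{bounded} terms, checking that $\phi(p)$ (for $m$ even) and $\phi_0,\phi_1$ (for $m$ odd) average to the $\tfrac18$ appearing in (5.29)--(5.32). You instead write $N^{\pm}=\tfrac12 N\pm\tfrac12(N^+-N^-)$, invoke Theorem 5.3 for the first half, and analyze the difference; the decisive point is your term (iii). For $m$ even the coefficient of $k$ in Lemma 5.5 really does oscillate between $\tfrac14$ and $\tfrac14+\tfrac1{2m}$ about the mean $\tfrac14+\tfrac1{4m}$ used in (5.29)--(5.32), so $D^{\pm}$ contains $\mp\tfrac{(-1)^k k}{4m}$, whose average is not $O(t^{-1/2})$ but $\mp\tfrac{(-1)^K r}{2m}+O(t^{-1/2})$ with $K=[\sqrt{t+1/4}+\tfrac12]$, $r=\sqrt{t+1/4}-K$ --- a continuous, period-$2$, mean-zero function of $\sqrt{t+1/4}$, exactly as you compute ($2\sum_{j<K}(-1)^jj^2=(-1)^{K-1}K(K-1)$ plus the partial block gives $(-1)^KK(t-K^2+1)/t=2(-1)^Kr+O(t^{-1/2})$). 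The paper's proof is silent about this term; the theorem survives because the term can be absorbed into $g$, but the resulting $g$ for $m$ even is \emph{not} simply $\tfrac12$ times the lune's $g$, which your argument makes explicit. Your remaining steps (i) and (ii) are exactly the paper's $A_2$ and $A_3$ devices from Theorems 4.4 and 5.3 and are fine. One caveat: your claim that the bounded periodic part of $N_N^+-N_N^-$ has mean $\tfrac14$ requires the sign $-\tfrac{m-p}{2m}$ (for $p$ odd) in (5.22), not the $+$ printed there; the printed sign contradicts $N_N^++N_N^-=N_N$ and (5.8), and the proof of Lemma 5.5 gives $N_N^+-N_N^-=n+1$ for $p$ odd, which confirms the value $\tfrac14$ you use and the exact cancellation of the constant in $\widetilde N^+-\widetilde N^-$.
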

\begin{proof} If suffices to understand the contribution of the parity dependent terms in Lemma 5.5.  Suppose first that $m$ is even. When we average the function
\[\phi(p)=\left\{\begin{array}{cc} 0 & \text{ if $p$ is even}\\
\frac {m-p} {2m} & \text{ if $p$ is odd}\end{array}\right.\]
over $1\le p \le m$ we obtain
\[\frac 1 m \sum_{j=0}^{\frac {m-1} 2}\frac {2j+1}{2m}=\frac 1 {2m^2}\left(\frac m 2\right)^2=\frac 1 8\]
so the average of $\phi(p)-\frac 1 8$ is $O(t^{-1})$.\\

Next suppose $m$ is odd.  This time we have to average the functions
\[\phi_0(p)=\frac {m-p}{4m}\]
when $n$ is odd, and
\[\phi_1(p)=\frac {m-p}{4m}+\left\{\begin{array}{cc} \frac 1 4 & \text{ if $p$ is odd}\\
-\frac 1 4 & \text{ if $p$ is even}\end{array}\right.\]
when $n$ is even.  Now the average value of $\phi_0$ is $\frac {m(m-1)}{8m^2}=\frac 1 8 - \frac 1 {8m}$, and the average value of $\phi_1$ is $\frac 1 8 - \frac 1 {8m} + \frac 1 {4m}=\frac 1 8 + \frac 1 {8m}$ because there is exactly one more odd value of $p$ than even values of $p$ in $1\le p \le m$.  Thus when we average over $n$ we get $\frac 1 8$.
\end{proof}
\section{Cone point singularities}

The first example of a surface with cone point singularities is the flat projective plane obtained from the unit square by identifying both pairs of opposite edges with reversed orientation.  This yields two singular points with cone angle $\pi$.  This example was analyzed in [JS] where it was shown that
\beq N(t)=\frac 1 4 N_T(t)+\frac 1 4 \pm \frac 1 2
\eeq
where $T$ is the $2\times 2$ torus, and the choice of $\pm$ sign corresponds to the parity of $\left[\frac {t^{1/2}} \pi\right]$.  Thus the choice
\beq \widetilde N(t)=\frac 1 {4\pi}t+\frac 1 4
\eeq
leads to the analog of Theorem 2.2.  We note that the two cone points yield $2\cdot 2 \psi(\frac \pi 2)=\frac 1 4$ for the constant in (6.2).\\

Our next example is the surface of a regular tetrahedron.  This is discussed in detail in [GKS].  Let $T$ be the torus associated to the lattice $\mathscr L$ generated by the vectors $(2,0)$ and $(1,\sqrt 3)$.  This is similar to the hexagonal torus discussed in section 2 but it is larger, and a fundamental domain consists of eight equilateral triangles.  There is a two-fold covering of the tetrahedron by $T$ (deleting singular points), and so the eigenfunctions on $T$ have the form
\beq e(k,j)=e^{2\pi i(ku+jv)\cdot x}
\eeq
for $u=(\frac 1 2, \frac {\sqrt 3} 6)$ and $v=(0,\frac {\sqrt 3} 3)$, generators of the dual lattice $\mathscr L^*$, and $(k,j)\in\mathbb Z^2$, with eigenvalue
\beq \frac {4\pi^2} 3 (j^2+k^2+jk),
\eeq
while the eigenfunctions on the tetrahedron are
\beq \cos 2\pi (ku+jv)\cdot x = \cos 2\pi \left(\frac k 2 x_1+\left(\frac {\sqrt 3} 3 j +\frac {\sqrt 3} 6 k \right)x_2\right).
\eeq
Aside from $(k,j)=(0,0)$, the two choices $(k,j)$ and $(-k,-j)$ collapse to one choice in (6.5), so
\beq N(t)=\frac 1 2 N_T(t)+\frac 1 2
\eeq
This leads to the choice
\beq \widetilde N(t)=\frac {\sqrt 3} {4\pi}t+\frac 1 2
\eeq
and the analog of Theorem 2.2 holds.  Note that the tetrahedron has four cone points with cone angles $\pi$, and $4\cdot 2\psi(\frac\pi 2)=\frac 1 2$.\\

Next we consider a half of the tetrahedron sliced so that two adjacent faces are cut by the perpendicular bisectors of their common edge.  The boundary of the half-tetrahedron consists of the cut line together with a side edge of the face that is entirely in the surface (see Figure 6.1).\\

\begin{figure}[h!]
\centering
\includegraphics[scale=.7]{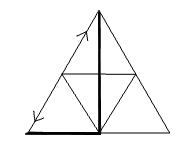}
\caption{Half-tetrahedron with boundary line in dark.}
\end{figure}

\noindent If we impose Neumann boundary conditions then for generic choice of $(k,j)$ we have eigenfunctions
\beq \cos 2\pi (ku+jv)\cdot x + \cos 2\pi (-ku+(k+j)v)\cdot x,
\eeq
while with Dirichlet boundary conditions we have
\beq \cos 2\pi (ku+jv)\cdot x - \cos 2\pi (-ku+(k+j)v)\cdot x
\eeq
corresponding to $(k,j)$ and $(-k,k+j)$ in (6.5).  However, in the nongeneric cases, if $k=0$ then we have one function of the form (6.8) and none of the form (6.9) corresponding to a single function of the form (6.5), and similarly for $(2k,-k)$.  Thus
\beq N(t)=\frac 1 4 N_T(t)+\frac 1 2 \left(\left[\frac {\sqrt 3 t^{1/2}}{2\pi}\right]+\left[\frac {t^{1/2}}{2\pi}\right]\right)+\frac 3 4
\eeq
for Neumann boundary conditions, and
\beq N(t)=\frac 1 4 N_T(t)-\frac 1 2 \left(\left[\frac {\sqrt 3 t^{1/2}}{2\pi}\right]+\left[\frac {t^{1/2}}{2\pi}\right]\right)-\frac 1 4
\eeq
for Dirichlet boundary conditions.  Thus we choose
\beq \widetilde N(t)=\frac {\sqrt 3} {8\pi} t \pm \left(\frac {(\sqrt 3 + 1)}{4\pi} t^{1/2}\right)+\frac 1 4
\eeq
(+ for Neumann and $-$ for Dirichlet) in order to obtain the analog of Theorem 2.2.  Note that $\sqrt 3 + 1$ is the length of the boundary.  To explain the constant term we have to remember that angles at corners have to be measured in terms of the intrinsic geometry of the surface.  Thus the boundary actually only has two corners, each of angle $\frac \pi 2$, so these contribute $2\cdot \frac 1 {16}=\frac 1 8$.\\

Our last set of examples are the glued lunes, where we identify the boundary arcs of a lune in the same orientation.  Here we will be able to allow any positive integer $m$ for the lune $L_m$, so the glued lune $\widetilde L_m$ will have two cone points with cone angle $\frac {2\pi} m$.  Eigenfunctions on $\widetilde L_m$ are just spherical harmonics on the sphere satisfying rotation invariance under $z\mapsto e^{\frac {2\pi i} m}z$.  As in section 5, we may identify a basis of the space $\mathbb P_N$ of homogeneous polynomials of degree $N$ with this invariance as functions of the form $|z|^{2j}z^{mk}w^\ell$ or $|z|^{2j}\bar z^{mk}w^\ell$ with $2j+mk+\ell=N$.  For any fixed $j$ and $k$ with $2j+mk\le N$ there is a unique choice of $\ell$, and there are two basis elements when $k>0$ and one when $k=0$.  By considering the map $f\mapsto |z|^2f$ from $\widetilde{\mathbb P}_{N-2}$ to $\widetilde{\mathbb P}_N$ we may compute the difference $\dim\widetilde{\mathbb P}_N-\dim\widetilde{\mathbb P}_{N-2}$ by counting the basis elements in $\widetilde {\mathbb P}_N$ corresponding to the choice $j=0$, so analogous to Lemma 5.1 we have
\beq \dim\widetilde{\mathbb P}_N-\dim\widetilde{\mathbb P}_{N-2}=2\left[\frac N m\right]+1
\eeq
Note that this is exactly the sum of (5.4) and (5.5), since the eigenspace for $\widetilde L_m$ is exactly the sum of the Neumann and Dirichlet eigenspaces of $L_{2m}$.  In particular, to get $N(t)$ for $\widetilde L_m$ we simply need to add (5.8) and (5.9), so if $k\equiv p \mod m$ and $k^2-k\le t < k^2+k$ we have
\beq N(t)=\frac {k^2} m + \frac {p(m-p)} m .
\eeq
Just as in (5.10) we choose
\beq \widetilde N(t)=\frac t m + \frac 1 6(m-\frac 1 m)+\frac 1 {3m}
\eeq
and obtain the analog of Theorem 5.3.  The contribution of the two cone angles to the constant is $2\cdot 2\psi(\frac \pi m)=\frac 1 6 (m-\frac 1 m)$.

\section{Sorting by symmetry types}

Suppose the surface has a finite group $G$ of isometries.  The group action then commutes with the Laplacian, so the eigenspaces are preserved, and we may sort the eigenfunctions by symmetry type, specifically the irreducible representations $\{\pi_j\}$ of $G$.  This situation was discussed for a more general setting in [S], where it was suggested that asymptotically, the proportion of eigenfunctions of each symmetry type should be $\displaystyle \frac {d_j^2} {\#G}$, where $d_j=\dim\pi_j$ (it is well known that $\sum d^2_j=\#G$).  This was proposed as a heuristic idea that is not universally valid (there are simple counterexamples), but should hold if there is a fundamental domain for the group whose boundary is relatively small.  In the case of surfaces there is usually a fundamental domain that is a polygon, so its boundary is one-dimensional.  Here we will be able to prove a more refined statement for some of the surfaces we have examined: the square torus, the square, the hexagonal torus and the equilateral triangle.  In the first two cases the group is the dihedral group $D_4$, and in the second two cases it is the dihedral group $D_3$.\\

The group $D_4$ has 8 elements that are generated by reflections in the diagonals of the square and the reflections in the horizontal or vertical bisectors.  There are 4 1-dimensional representations $1\pm\pm$, where the first $\pm$ indicates the symmetry type
\beq u(Rx)=\pm u(x)\eeq for diagonal reflections. and the second $\pm$ indicates the same symmetry equation where $R$ is a horizontal or vertical reflection.  There is also one 2-dimensional representation that we denote by 2.  Write $N_{\pm\pm}(t)N$ and $N_2(t)$ for the counting function of all eigenvalues $\lambda\le t$ that belong to each symmetry type.  (Note that an individual eigenspace may contain eigenfunctions of more than one symmetry type, and for $N_2(t)$ we count each eigenfunction basis element separately.)\\

For simplicity we choose a square of side length one, and this will represent the square torus if we identify opposite sides, or the square itself with either Neumann or Dirichlet boundary conditions throughout (we nix mixed boundary conditions because they do not respect the $D_4$ action).  So in all three cases we have
\beq N_{++}(t)+N_{+-}(t)+N_{-+}(t)+N_{--}(t)+N_2(t)=N(t),
\eeq
where $N(t)$ is given by $N_T(t)$ for the torus, and $N_N(t)$ and $N_D(t)$ in Lemma 2.1 with $a=b=1$.  

A fundamental domain for the $D_4$ action is a right isosceles triangle of hypotenuse length $\frac {\sqrt 2} 2$ and the equal side lengths $\frac 1 2$ that makes up an eighth of the square.  The key observation is that an eigenfunction of symmetry type $1\pm\pm$, when restricted to the triangle, satisfies Dirichlet or Neumann boundary conditions as illustrated in Figure 7.1, and conversely every triangle eigenfunction extends by the appropriate reflections to an eigenfunction on the torus or square.  Thus $N_\pm(t)$ is exactly the counting function for the triangle given by Lemma 2.3 and (3.22-3.30) with $a=\frac 1 2$.  Then we can use (7.2) to compute $N_2(t)$.
\begin{lemma} (a) For the square torus we have
\beq N_{++}(t)=\frac 1 8 N_T(t)+\frac 1 2\left(\left[\frac {t^{1/2}}{2\pi}\right]+\frac 1 2\right)+\frac 1 2 \left(\left[\frac {\sqrt 2 t^{1/2}}{4\pi}\right]+\frac 1 2\right)+\frac 3 8
\eeq
\beq N_{+-}(t)=\frac 1 8 N_T(t)-\frac 1 2\left(\left[\frac {t^{1/2}}{2\pi}\right]+\frac 1 2\right)+\frac 1 2 \left(\left[\frac {\sqrt 2 t^{1/2}}{4\pi}\right]+\frac 1 2\right)-\frac 1 8
\eeq
\beq N_{-+}(t)=\frac 1 8 N_T(t)+\frac 1 2\left(\left[\frac {t^{1/2}}{2\pi}\right]+\frac 1 2\right)-\frac 1 2 \left(\left[\frac {\sqrt 2 t^{1/2}}{4\pi}\right]+\frac 1 2\right)-\frac 1 8
\eeq
\beq N_{--}(t)=\frac 1 8 N_T(t)-\frac 1 2\left(\left[\frac {t^{1/2}}{2\pi}\right]+\frac 1 2\right)-\frac 1 2 \left(\left[\frac {\sqrt 2 t^{1/2}}{4\pi}\right]+\frac 1 2\right)+\frac 3 8
\eeq
\beq N_2(t)=\frac 1 2 N_T(t)-\frac 1 2
\eeq

\begin{figure}[h!]
\centering
\includegraphics[scale=.6]{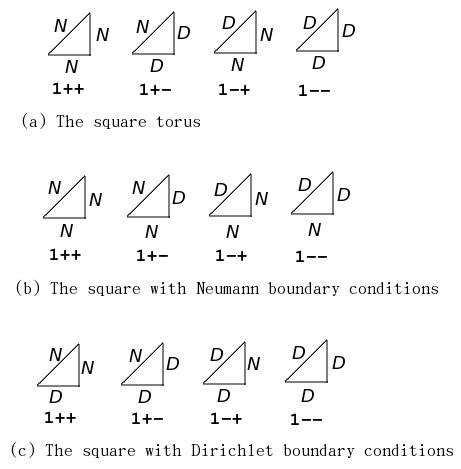}\caption{}
\end{figure}
\noindent(b) For the square with Neumann boundary conditions $N_{++}(t)$ is given by (7.3), $N_{-+}(t)$ is given by (7.5),
\beq N_{+-}(t)=\frac 1 2 N_{MM}(t)+\frac 1 2 \left(\left[\frac {\sqrt 2} {4\pi} t^{1/2}\right]+\frac 1 2 \right)
\eeq
\beq N_{--}(t)=\frac 1 2 N_{MM}(t)-\frac 1 2 \left(\left[\frac {\sqrt 2} {4\pi} t^{1/2}\right]+\frac 1 2 \right)
\eeq
\beq N_2(t)=\frac 3 4 N_T(t)-N_{MM}(t)-\left(\left[\frac {t^{1/2}} {2\pi} \right]+\frac 1 2 \right)-\frac 1 4
\eeq
where $N_{MM}$ is given by (3.10) with $a=\frac 1 2$.\\
(c) For the square with Dirichlet boundary conditions $N_{+-}(t)$ is given by (7.4), $N_{--}(t)$ is given by (7.6),
\beq N_{+-}(t)=\frac 1 2 N_{MM}(t)+\frac 1 2 \left(\left[\frac {\sqrt 2 t^{1/2}} {4\pi} \right]+\frac 1 2 \right)
\eeq
\beq N_{-+}(t)=\frac 1 2 N_{MM}(t)-\frac 1 2 \left(\left[\frac {\sqrt 2 t^{1/2}} {4\pi} \right]+\frac 1 2 \right)
\eeq
\beq N_2(t)=\frac 3 4 N_T(t)-N_{MM}(t)-\left(\left[\frac {t^{1/2}} {2\pi} \right]+\frac 1 2 \right)-\frac 1 4
\eeq
\end{lemma}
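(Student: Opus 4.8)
The plan is to realize each symmetry class as an ordinary Neumann/Dirichlet/mixed eigenvalue problem on a fundamental domain for the $D_4$ action, quote those counting functions from Sections 2 and 3, and then extract $N_2(t)$ from the partition (7.2). A fundamental domain is the right isosceles triangle $\Delta$ with legs of length $\tfrac12$ that forms one eighth of the square --- say the triangle with vertices at a corner of the square, the midpoint of an incident edge, and the centre. Its three sides lie on reflection axes of $D_4$: the hypotenuse on a diagonal of the square, and the two legs on the horizontal and vertical bisectors (in case (a)); in cases (b) and (c) one of the legs lies instead on a genuine boundary edge of the square. If $u$ has symmetry type $1\epsilon_1\epsilon_2$, with $\epsilon_1$ the sign for the diagonal reflections and $\epsilon_2$ the sign for the horizontal and vertical reflections, then the relation $u(Rx)=\epsilon\, u(x)$ for the reflection $R$ in a given axis forces $\partial_n u=0$ on that axis when $\epsilon=+$ and $u=0$ on it when $\epsilon=-$, while on a side lying on a genuine boundary $u$ inherits the condition ($N$ or $D$) imposed there. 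Conversely, an eigenfunction on $\Delta$ with these three side conditions extends uniquely, by successive reflections across the three sides (which generate $D_4$), to a smooth eigenfunction of the prescribed symmetry type on the whole square or torus; the vertex angles $\tfrac\pi4, \tfrac\pi2, \tfrac\pi4$ of $\Delta$ are exactly what is needed for the reflected pieces to fit together smoothly. Hence $N_{\epsilon_1\epsilon_2}(t)$ is precisely the eigenvalue counting function of $\Delta$ with the corresponding boundary data.

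Since $\Delta$ is half of a square of side $\tfrac12$, its Neumann, Dirichlet and mixed counting functions are exactly those computed in Lemma 2.3 and in (3.22)--(3.30) with $a=\tfrac12$, and the torus $T$ in those formulas, of side $2a=1$, is our square torus. In case (a) all three sides of $\Delta$ are reflection axes, so the four sign choices give the pure Neumann problem ($1{+}{+}$), the pure Dirichlet problem ($1{-}{-}$), and the two problems DN and ND with like conditions on the legs and the opposite condition on the hypotenuse ($1{+}{-}$ and $1{-}{+}$); specializing (2.15)--(2.16) and (3.22)--(3.23) to $a=\tfrac12$ gives (7.3)--(7.6). In case (b) the boundary leg always carries Neumann, so the four problems are pure Neumann ($1{+}{+}$), ND ($1{-}{+}$), and the problems MN and MD --- mixed on the two legs, with Neumann resp.\ Dirichlet on the hypotenuse --- for $1{+}{-}$ and $1{-}{-}$; specializing (2.15), (3.22), (3.27)--(3.28) gives (7.3), (7.5), (7.8), (7.9). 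Case (c) is the same with the roles of the two reflection signs on the boundary leg exchanged, giving (7.4) and (7.6) together with the MN/MD formulas (7.11)--(7.12). (The greatest-integer terms appear as $[x+\tfrac12]$ in (3.27)--(3.28) and as $[x]+\tfrac12$ in (7.8)--(7.9); these differ by a bounded oscillation, harmless for the refined asymptotics.)

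Finally, the dimension of each eigenspace equals the sum of the multiplicities of the four one-dimensional symmetry types plus twice the multiplicity of the two-dimensional type, and since $N_2$ counts basis elements individually the partition (7.2) is exact: $N_2(t)=N(t)-N_{++}(t)-N_{+-}(t)-N_{-+}(t)-N_{--}(t)$, where $N(t)=N_T(t)$ in case (a) and $N(t)=N_N^{(S)}(t)$ resp.\ $N_D^{(S)}(t)$ (Lemma 2.1 with $a=b=1$) in cases (b) and (c). Substituting the four formulas just obtained, most of the greatest-integer contributions cancel in pairs --- completely in case (a), and down to a single surviving term $\big[\tfrac{t^{1/2}}{2\pi}\big]$ in cases (b),(c) once the Weyl pieces are combined (using Lemma 2.1 to rewrite $N_N^{(S)}$, $N_D^{(S)}$) --- and collecting the remaining torus counting functions and constants produces (7.7), (7.10) and (7.14). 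In cases (b),(c) it is convenient first to note the groupings $N_{++}+N_{-+}$ and $N_{+-}+N_{--}$, which by (3.20)--(3.21) and (3.26) are themselves the Neumann and the MM counting functions of the side-$\tfrac12$ square.

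The argument is entirely elementary; the one delicate point is the bookkeeping in the first step --- matching each sign $\epsilon_i$ with a definite side of $\Delta$, hence with a Neumann or a Dirichlet condition, and keeping straight the side lengths of the auxiliary squares and tori entering through Lemmas 2.1 and 2.3 and equations (3.20)--(3.30). A slip there interchanges $N_{+-}$ with $N_{-+}$ or, worse, corrupts the constant terms, which are the whole point of the refined asymptotics. I would guard against it by checking the claimed identities directly against the first several eigenvalues $4\pi^2(j^2+k^2)$ of the square torus, sorting the corresponding lattice points by $D_4$ orbit type and verifying the count of eigenfunctions in each symmetry class.
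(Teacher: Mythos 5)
Your proposal is correct and is essentially the paper's own argument: the paper identifies each one‑dimensional symmetry type with a Neumann/Dirichlet/mixed problem on the eighth‑square right isosceles triangle (Figure 7.1), quotes Lemma 2.3 and (3.22)--(3.30) with $a=\tfrac12$, and solves (7.2) for $N_2(t)$ --- its stated proof is just ``direct substitution of triangle formulas.'' Your observation about the $[x+\tfrac12]$ versus $[x]+\tfrac12$ discrepancy between (3.27)--(3.28) and (7.8)--(7.9) is a fair catch of a slip in the paper itself, and you correctly note it is harmless for the asymptotics.
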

\begin{proof} Direct substitution of triangle formulas.\end{proof}

We now define the refined asymptotics:\\
(a) For the square torus
\beq \widetilde N_{++}(t)=\frac 1 {32\pi} t + \left(\frac 1 {4\pi}+\frac {\sqrt 2} {8\pi}\right)t^{1/2}+\frac 3 8
\eeq
\beq \widetilde N_{+-}(t)=\frac 1 {32\pi} t + \left(-\frac 1 {4\pi}+\frac {\sqrt 2} {8\pi}\right)t^{1/2}-\frac 1 8
\eeq
\beq \widetilde N_{-+}(t)=\frac 1 {32\pi} t + \left(\frac 1 {4\pi}-\frac {\sqrt 2} {8\pi}\right)t^{1/2}-\frac 1 8
\eeq
\beq \widetilde N_{--}(t)=\frac 1 {32\pi} t + \left(-\frac 1 {4\pi}-\frac {\sqrt 2} {8\pi}\right)t^{1/2}+\frac 3 8
\eeq
\beq \widetilde N_2(t)=\frac  1 {8\pi}t-\frac 1 2;
\eeq
(b) For the square with Neumann boundary conditions $\widetilde N_{++}$ is given by (7.14), $\widetilde N_{-+}$ is given by (7.16),
\beq \widetilde N_{+-}(t)=\frac 1 {32\pi} t + \frac {\sqrt 2} {8\pi} t^{1/2}
\eeq
\beq \widetilde N_{--}(t)=\frac 1 {32\pi} t - \frac {\sqrt 2} {8\pi} t^{1/2}
\eeq
\beq \widetilde N_2(t)=\frac  1 {8\pi}t-\frac 1 {2\pi} t^{1/2}-\frac 1 4;
\eeq
(c) For the square with Dirichlet boundary conditions $\widetilde N_{+-}(t)$ is given by the right side of (7.15), $\widetilde N_{--}(t)$ is given by the right side of (7.17), $\widetilde N_{++}(t)$ is given by (7.19), $\widetilde N_{-+}(t)$ is given by (7.20),
\beq \widetilde N_2(t)=\frac  1 {8\pi}t+\frac 1 {2\pi} t^{1/2}-\frac 1 4.
\eeq
\indent Then the analog of Theorem 2.2 holds.\\

Next we consider the case of the hexagonal torus associated to the hexagon in Figure 2.1.  The group $D_3$ of order six is generated by the three reflections in the three bisectors.  It has two 1-dimensional representations denoted $1\pm$ according to the symmetry (7.1) with respect to all three reflections, and a 2-dimensional representation denoted 2.  The analog of (7.2) is 
\beq N_+(t)+N_-(t)+N_2(t)=N_T(t),
\eeq
and we use this to solve for $N_2(t)$.  Since $1+$ eigenfunctions may be identified with Neumann eigenfunctions on the equilateral triangle fundamental domain (shaded in Figure 2.1), and $1-$ eigenfunctions with Dirichlet eigenfunctions, we have $N_+(t)$ given by (2.22) and $N_-(t)$ by (2.23).  This leads to
\beq \widetilde N_+(t)=\frac 1 {4\pi}\frac {\sqrt 3} 4 t + \frac 3 {4\pi} t^{1/2}+\frac 1 3
\eeq
\beq \widetilde N_-(t)=\frac 1 {4\pi}\frac {\sqrt 3} 4 t - \frac 3 {4\pi} t^{1/2}+\frac 1 3
\eeq
\beq \widetilde N_2(t)=\frac {\sqrt 3} {4\pi} t - \frac 2 3
\eeq

Finally we consider the equilateral triangle with either Neumann or Dirichlet boundary conditions.  A fundamental domain for the action of $D_3$ is a $30^\circ-60^\circ-90^\circ$ triangle equal to a sixth of the equilateral triangle.  Again, the eigenfunctions on the equilateral triangle with $1\pm$ symmetry correspond to eigenfunctions on the fundamental domain with boundary conditions shown in Figure 7.2.\\

\begin{figure}[h!]
\centering
\includegraphics[scale=.6]{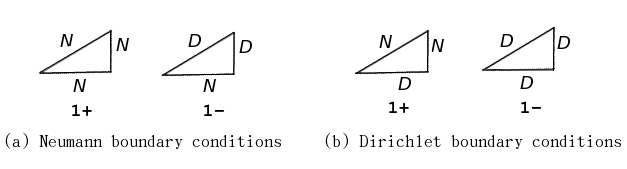}\caption{}
\end{figure}
\noindent Thus for Neumann boundary conditions, $N_T(t)$ is given by (2.26) and $N_-(t)$ is given by (3.32), while for Dirichlet boundary conditions $N_+(t)$ is given by (3.31) and $N_-(t)$ is given by (2.27).  (Here we have to rescale because the triangles are smaller by a factor of $\frac 1 {\sqrt 3}$.) This leads to the following choices:\\
\indent(a) For Neumann boundary conditions
\beq \widetilde N_+(t)=\frac 1 {4\pi}\frac {\sqrt 3}{24} t + \frac {\sqrt 3 + 1}{8\pi}t^{1/2}+\frac 5 {12}
\eeq
\beq \widetilde N_-(t)=\frac 1 {4\pi}\frac {\sqrt 3}{24} t + \frac {-\sqrt 3 + 1}{8\pi}t^{1/2}-\frac 1 {12}
\eeq
\beq \widetilde N_2(t)=\frac 1 {4\pi}\frac {\sqrt 3}{6} t -\frac 1 {4\pi} t^{1/2}-\frac 1 3;
\eeq
\indent(b) For Dirichlet boundary conditions
\beq \widetilde N_+(t)=\frac 1 {4\pi}\frac {\sqrt 3}{24} t + \frac {\sqrt 3 - 1}{8\pi}t^{1/2}-\frac 1 {12}
\eeq
\beq \widetilde N_-(t)=\frac 1 {4\pi}\frac {\sqrt 3}{24} t - \frac {\sqrt 3 + 1}{8\pi}t^{1/2}+\frac 5 {12}
\eeq
\beq \widetilde N_2(t)=\frac 1 {4\pi}\frac {\sqrt 3}{6} t +\frac 1 {4\pi} t^{1/2}-\frac 1 3.
\eeq

Again the analog of Theorem 2.2 holds.

\newpage
\section*{References}

\noindent[BS] M. van den Berg and S. Srisatkunarajah, {\em Heat flow and Brownian motion for a region in $\mathbb R^2$ with a polygonal boundary}, Probab. Theory Related Fields {\bf 86} (1990), 41-52.\\

\noindent[Bu] P. Buser, {\em Geometry and spectra od compact Riemann surfaces}, Birkhauser Boston, 1992\\

\noindent[G] P.B. Gilkey, {\em Asymptotic formulae in spectral geometry}, Chapman \& Hall /CRC, Boca Raton 2004\\

\noindent[GKS] E. Greif, D. Kaplan and R. Strichartz, {\em Spectrum of the Laplacian on regular polyhedral surfaces}, in preparation.\\

\noindent[JS] S. Jayakar and R. Strichartz, {\em Average number of lattice points in a disk}, preprint.\\

\noindent[K]  M. Kac, {\em Can one hear the shape of a drum} Amer. Math. Monthly, {\bf 783} (1966), 1-23.\\

\noindent[Sa] P. Sarnak, {\em Spectra of hyperbolic surfaces,} Bull. Amer. Math. Soc. {\bf 40} (2003), 441-478.\\

\noindent[S] R. Strichartz, {\em Spectral asymptotics revisited}, J. Fourier Anal. Appl. {\bf 18} (2012), 626-659.\\

\noindent[Ta] R. Takahashi, {\em Sur les repr\'esentations unitaires des groupes de Lorentz g\'en\'eralis\'es}, Bull. Math. Soc. Fr. {\bf 91} (1963), 289-433.\\
\end{document}